\documentclass[10pt,a4paper,reqno]{amsart}
\usepackage{amsfonts,amsthm,latexsym,amsmath,amssymb,amscd,amsmath,epsf, 
graphicx}


\bibliographystyle{plain} 
\newtheorem{tm}{Theorem}
\newtheorem{defi}[tm]{Definition}
\newtheorem{rem}[tm]{Remark}

\newtheorem{lm}[tm]{Lemma}
\newtheorem{ex}[tm]{Example}
\newtheorem{cor}[tm]{Corollary}
\newtheorem{prop}[tm]{Proposition}

\newtheorem{conj}[tm]{Conjecture}

\newcommand{\al}{\alpha}

\newcommand{\la}{\lambda}

\newcommand{\bC}{\mathbb C}
\newcommand{\bR}{\mathbb R}
\newcommand{\bN}{\mathbb N}
\newcommand{\bZ}{\mathbb Z}
\newcommand{\RP}{\mathbb{RP}}

\newcommand{\NA}{\mathcal{NA}}

\newcommand{\field}[1]{\mathbb{#1}}

\newcommand{\N}{\field{N}}

\newcommand{\sgn}{\operatorname{sgn}}

\newcommand{\Ln}{\operatorname{Ln}}

\begin{document}
\title[A tropical analog of Descartes' rule of signs]{A tropical analog of Descartes' rule of signs}

\author[J.~Forsg{\aa}rd]{Jens Forsg{\aa}rd}
\noindent 
\address{Department of Mathematics,
Texas A\&M University, College Station, TX 77843.}
\email{jensf@math.tamu.edu}


\author[D.~Novikov]{Dmitry Novikov}
\noindent 
\address{Department of Mathematics, Weizmann Institute of Science, Rehovot, Israel}
\email{dmitry.novikov@weizmann.ac.il}
\thanks{Supported by the Minerva foundation with funding from the Federal 
          	German Ministry for Education and Research.}

\author[B.~Shapiro]{Boris Shapiro}
\noindent 
\address{Department of Mathematics, Stockholm University, SE-106 91
Stockholm,
         Sweden}
\email{shapiro@math.su.se}

\subjclass[2010] {Primary 26C10, Secondary 30C15}

\keywords{ tropicalization, real and tropical roots, Descartes' rule of signs}

\begin{abstract}
We prove that for any degree $d$, there exist  (families of) finite  sequences $\{\la_{k,d}\}_{0\le k\le d}$ of positive numbers such that, for any real polynomial $P$ of degree $d$, the number of its real roots is less than or equal to the number of the so-called essential tropical roots of the polynomial obtained from $P$ by multiplication of its coefficients by $\la_{0,d},\la_{1,d},\dots , \la_{d,d}$ respectively. In particular, 
for any real univariate polynomial $P(x)$ of degree $d$ a with non-vanishing constant term, we conjecture that one can take $\la_{k,d}=e^{-k^2},\,k=0,\dots, d $. The latter claim can be thought of  as a tropical generalization of Descartes's rule of signs. We settle this conjecture up to degree $4$ as well as a weaker statement for arbitrary real polynomials.  Additionally we  describe an application of the latter conjecture  to the classical Karlin  problem on zero-diminishing sequences. 
\end{abstract}

\date{}
\maketitle

\section{Introduction}

The famous Descartes' rule of signs claims that the number of positive roots 
of a real univariate polynomial does not exceed the number of sign changes 
in its sequence of coefficients. In what follows, among other things,  we suggest a conceptually new 
conjectural upper bound on the number of real roots of real univariate polynomial applicable in 
the situation when Descartes' rule of signs gives a trivial restriction. 

\medskip
Recall from the literature that a sequence $ \lambda = \{\lambda_k\}_{k=0}^\infty$
of real numbers is called a \emph {multiplier sequence {\rm(}of the first kind{\rm)}} if the
diagonal operator $T_\lambda\colon \bR[x]\rightarrow \bR[x]$ 
defined by $x^k \mapsto \lambda_k x^k$, for $k=0, 1, \dots$,
and extended to $\bR[x]$ by linearity, 
preserves the set of real-rooted polynomials, see e.g.,  \cite{CC04}.
To formulate  our results, 
we need to introduce tropical analogs of
multiplier sequences.  
The following notion is borrowed from the classical Wiman--Valiron theory,
see e.g., \cite{Hay74}.
A non-negative integer $k$ is called a \emph{central index} of a  
polynomial
\[
P(x) = \sum_{i=0}^d a_i x^i
\]
if there exists a real number $x_k\geq 0$ such that
\begin{equation}
\label{eqn:Lopsidedness}
 |a_k|x_k^k \geq \sum_{i\neq k}|a_i|x_k^i.
\end{equation}
Condition \eqref{eqn:Lopsidedness} has also reappeared in the context of
amoebas, see, e.g., \cite{Rul03}.

To relate property \eqref{eqn:Lopsidedness} to real-rootedness of univariate polynomials, we recall 
that a real-rooted polynomial $P$ is called  \emph{sign-independently real-rooted} if each 
polynomial obtained by an arbitrary sign change of the coefficients of $P(x)$
is real-rooted as well, see \cite{PRS11}. One can easily show the following  statement.  

\begin{prop}
\label{pro:SignIndependently}
 A real polynomial $P$ of degree $d$ is sign-independently real-rooted
 if and only if every integer $k=0, \dots, d$ is a central index of $P$.
\end{prop}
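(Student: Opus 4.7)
The plan is to use Descartes' rule of signs as the bridge between sign-independent real-rootedness and the central-index condition. For $\sigma \in \{\pm 1\}^{d+1}$ write $P_\sigma(x) = \sum_i \sigma_i |a_i| x^i$; as $\sigma$ varies these exhaust the sign-change polynomials of $P$. Let $V^+(\sigma)$ and $V^-(\sigma)$ denote the number of sign changes of the coefficient sequences of $P_\sigma(x)$ and of $P_\sigma(-x)$; when all $a_i \neq 0$, one has $V^+(\sigma) + V^-(\sigma) = d$. Sign-independent real-rootedness is therefore equivalent to the Descartes bound being attained for every $\sigma$, i.e.\ $P_\sigma$ has exactly $V^+(\sigma)$ positive and $V^-(\sigma)$ negative real roots.

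For the $(\Leftarrow)$ direction, assume each $k \in \{0, \dots, d\}$ is a central index at some witness $x_k$. Comparing the defining inequalities at $x_{k_1}$ and $x_{k_2}$ for $k_1 < k_2$ forces $x_{k_1} \leq x_{k_2}$, and a small perturbation lets me assume $0 < x_0 < x_1 < \dots < x_d$. The central-index inequality at $x_k$ says precisely that $|a_k| x_k^k$ dominates $\sum_{i \neq k} |a_i| x_k^i$, hence
\[
\sgn P_\sigma(x_k) = \sigma_k, \qquad \sgn P_\sigma(-x_k) = (-1)^k \sigma_k
\]
for every $\sigma$. Counting sign changes of $P_\sigma$ along the chain $x_0 < \dots < x_d$ yields $V^+(\sigma)$ positive real roots, while along $-x_d < \dots < -x_0$ it yields $V^-(\sigma)$ negative real roots; the total of $d$ real roots exhausts the degree, so $P_\sigma$ is real-rooted.

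For the $(\Rightarrow)$ direction, the indices $k = 0$ and $k = d$ are trivially central (witnesses $x = 0$ and any sufficiently large $x$). For $0 < k < d$, argue by contradiction: if $|a_k| x^k < \sum_{i \neq k} |a_i| x^i$ for all $x \geq 0$, take $\sigma_k = -1$ and $\sigma_i = +1$ for $i \neq k$; then $P_\sigma(x) = \sum_{i \neq k} |a_i| x^i - |a_k| x^k > 0$ on $[0,\infty)$, so $P_\sigma$ has no positive real roots. Its coefficient sign pattern $(+, \dots, +, -, +, \dots, +)$ gives $V^+(\sigma) = 2$, so the Descartes bound is not attained and $P$ fails to be sign-independently real-rooted.

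The main delicate point is the forward direction when the central-index inequality is attained with equality at some $x_k$, in which case $x_k$ itself may be a root of certain $P_\sigma$ and the strict sign-change count requires a limiting argument. Degenerate coefficient patterns are minor bookkeeping: if $a_0 = 0$ one factors out a power of $x$ and inducts on the degree, while a vanishing $a_j$ for $0 < j < d$ with $a_0 \neq 0$ immediately prevents $j$ from being a central index and produces a sign pattern violating Descartes equality by the same flipping trick.
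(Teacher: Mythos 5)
Your proposal is correct and follows essentially the same route as the paper: the ``if'' direction reads off the signs of $P_\sigma$ at the witness points $x_k$ and invokes Descartes' rule on both $\{a_k\}$ and $\{(-1)^k a_k\}$, and the ``only if'' direction isolates the $k$-th monomial against all others and exploits that the resulting sign pattern forces exactly two positive roots (the paper argues this directly via the midpoint of those two roots, you argue the contrapositive, but it is the same mechanism). The degenerate issues you flag (equality at a witness, vanishing coefficients) are glossed over in the paper's proof as well.
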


To proceed, we will need the following similar notion.
A non-negative integer $k$ is said to be a \emph{tropical index} of $P$ if there exists 
a number $x_k\geq 0$ such that
\begin{equation}
\label{eqn:Tropical}
 |a_k|x_k^k \geq \max_{i\neq k}\,|a_i|x_k^i.
\end{equation}

Notice that \eqref{eqn:Tropical} is an analog of 
\eqref{eqn:Lopsidedness} if the right-hand side of \eqref{eqn:Lopsidedness} is 
interpreted as a tropical sum.
We will say that a polynomial $P$ of degree $d$
is \emph{tropically real-rooted}
 if each integer $k=0, \dots, d$ is a tropical index of $f$.


\medskip
By the (standard) \emph{tropicalization} of a real polynomial $P(x)=\sum_{i=0}^d a_i x^i$ we mean the tropical polynomial
 given by:
\begin{equation}
\label{eqn:tropicalization}
tr_P(\xi) =\max_{0\le i\le d} (i\xi +\ln |a_i|),\;\xi\in \bR.
\end{equation}
(In the literature the function $tr_P(\xi)$ is also referred to  as the \emph{Archimedean tropical
polynomial} associated to $P$.) 
If $a_i = 0$, then the corresponding term in $tr_P(\xi)$ should be interpreted
as  $-\infty$, and thus it can be ignored when taking the maximum.
\begin{rem}{\rm 
One can describe $tr_P(\xi)$ as follows. Define the set of points on $(u,v)$-plane corresponding to the monomials of $P$ as $A_P=\{(k,\log |a_k|), k=0,...,d\}$.
Let $A_P(u)$ be a piecewise linear continuous function on $[0,d]$, linear on intervals $[k,k+1]$ and such that $A_P(k)=\log |a_k|$ for $k=0,...,d$.
Denote by $\widetilde A_P(u)$ the least concave majorant of $A_P(u)$ on $[0,d]$, and let 
$$
\NA_P=\{v\le \widetilde A_P(u), 0\le u\le d\}
$$ 
be the Archimedian Newton polytope of $P$. Then $k$ is a tropical index of $P$ if and only if $(k, \log |a_k|)$ is a boundary point of $\NA_P$. 
Then $tr_P(\xi)=\max_{p\in \NA_P} (\xi, 1)\cdot p$, i.e., $tr_P(\xi)$ is the support function of $\NA_P$. Alternatively, $tr_P(\xi)$ is the Legendre transform of $-\widetilde A_P(u)$.}
\end{rem}
\medskip
Any corner of the graph of $tr_P(\xi)$, i.e., a value of $\xi$ at which its slope 
changes, is called a {\it tropical root} of $tr_P(\xi)$. We define the (tropical) \emph{multiplicity} of a tropical root $\zeta$ of $tr_P$ to be 
one less than 
the number of terms of \eqref{eqn:tropicalization} for which the maximum in the right-hand side of \eqref{eqn:tropicalization} 
is attained at $\zeta$. (Notice that this definition differs from the standard definition of 
root multiplicity in tropical geometry.
This illustrates our focus on real  rather
than complex-valued polynomials.) 
With our definition of tropical root multiplicity, the number of tropical roots of $tr_P(\xi)$ counted
with multiplicities is one less than the number of tropical indices of $P$.
In particular, the number of tropical roots of $tr_P(\xi)$ is at most by 
one less than the number of monomials of $P$, which is analogous to  the fact that the  number of 
real roots of $P$ is at most one less than its number of monomials.

\medskip

We will now  define positive and negative tropical roots of $P$ using the signs of its coefficients. 
Let $k_0 \le k_1 \le \dots \le k_m$ be the tropical indices of $P$.
Consider two sequences  $\{\sgn(a_{k_i})\}_{0\le i \le m}$ and  $\{\sgn((-1)^{k_i} a_{k_i})\}_{0\le i \le m}$. 

 Consider two consecutive tropical indices $k_{i-1}$ and $k_{i}$ of the polynomial $P$; 
 to this pair we associate the tropical root $\xi_i = \ln(a_{i-1}/a_{i})$ of $tr_P(\xi)$.
 If the difference $k_{i+1} - k_{i}$ is odd, then the pair $(k_{i-1}, k_i)$ contributes 
 a sign alternation in exactly one of the above sequences. In this case, we will say that $\xi_i$
 is a {\it positive} (respectively {\it negative}) {\it essential tropical root} of $P$. 
 If the difference $k_{i+1} - k_i$ is even, then either the pair $(k_{i-1},k_i)$ does not
 contribute  a sign alternation in any of the above sequences, or it contributes 
 a sign alternation in both. In the former case we will say that $\xi_i$ is a {\it non-essential
 tropical root }of $P$, and in the latter case we will say that $\xi_i$ is a {\it positive-negative 
 essential tropical root} of $P$.  By  the number of \emph{positive essential tropical roots of $P$} we mean the sum of the number of positive and positive-negative tropical roots of $P$. 
 Analogously, by  the number of \emph{negative essential tropical roots of $P$} we mean the sum of the number of negative and positive-negative tropical roots of $P$. Finally  by  the  total number of \emph{essential tropical roots of $P$} 
we call  the sum of the above two numbers. 
 
 It is easy to see that the number of essential tropical roots of $P$ is at most $d$.

 \begin{ex}
{\rm Consider  $P_1(x) = 1 + x^2$. The tropical indices of $P_1$ are  $k_0 = 0$ and $k_1 = 2$.
 As $\ln|a_1| = \ln|0| = -\infty$, the polynomial $P_1$ has (with our definition of multiplicity) exactly one
simple tropical root. To count the number of positive and negative tropical roots of $P_1$ we need to  count the
 number of sign alternations in the sequences $\{1,1\}$ and $\{1, (-1)^2\} = \{1,1\}$ respectively.
 That is, the number of essential tropical roots of $P$ is equal to $0$.
 
 Consider now the polynomial $P_2(x) = 1 -  x^2$. Similarly to  $P_1$, the polynomial $P_2$ has
 one tropical root. However, to count the number of positive and negative tropical roots of $P_2$ we
 count the number of sign alternations in the sequences $\{1,-1\}$ and $\{1, -(-1)^2\} = \{1,-1\}$ respectively.
 That is, the number of essential tropical roots of $P_2$ is equal to $2$.}
 \end{ex}

\medskip 
As the definitions of the central and the tropical indices only depend
on the modulis $|a_i|$, for $i=0, \dots, d$, 
they  immediately extend to complex-valued polynomials.
However, below we restrict ourselves only to real polynomials
and positive sequences $ \lambda$. 

\medskip
A sequence $\lambda=\{\la_k\}_{k=0}^\infty$ is called \emph{log-concave} if 
$\lambda_k^2 \geq \lambda_{k-1}\lambda_{k+1}$ for all $k$.
In \cite{PRS11} using discriminant amoebas, it is proven  
that the diagonal operator $T_ \lambda\colon \bR[x]\rightarrow \bR[x]$ 
preserves the set of sign-independently
real-rooted polynomials if and only if $ \lambda$ is log-concave.
For this reason, log-concave sequences were called 
\emph{multiplier sequences of the third kind} in \emph{loc.~cit.} 
We prefer to refer to log-concave sequences $ \lambda$  as \emph{tropical multiplier sequences}. 

\begin{defi}
 {\rm A positive sequence $ \lambda= \{\lambda_k\}_{k=0}^\infty$ is said to be a \emph{ tropical {\rm(}resp.\ central{\rm)} index preserver} 
 if for each polynomial $P$ the set of tropical (resp.\ central) indices
 of $P$ is a subset of the set of tropical (resp.\ central) indices
 of the polynomial $T_ \lambda[P]$.}
\end{defi}

Our first result is as follows.
\begin{tm}
\label{thm:TropicalIndexPreserver}
 For  positive sequences $\lambda,$ the following three conditions are equivalent: 
\begin{enumerate}
\item $\lambda$ is log-concave, i.e. $\la$ is a tropical mutliplier sequence;
\item $\lambda$ is a tropical index preserver;
\item $\lambda$ is a central index preserver.
\end{enumerate}
\end{tm}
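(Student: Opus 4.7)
The plan is to establish $(1) \Rightarrow (2)$ and $(1) \Rightarrow (3)$ simultaneously via a common rescaling argument, and then to close the equivalence by exhibiting trinomial test polynomials that detect the failure of log-concavity, giving $(2) \Rightarrow (1)$ and $(3) \Rightarrow (1)$.

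For the forward implications, suppose that $k$ is a tropical (resp.\ central) index of $P=\sum a_i x^i$ with witness $x_k>0$. I would hunt for a witness for $T_\lambda[P]$ of the rescaled form $y_k=c\,x_k$ with a scalar $c>0$ yet to be chosen. A direct substitution reduces the inequality \eqref{eqn:Tropical} (resp.\ \eqref{eqn:Lopsidedness}) for $T_\lambda[P]$ at $y_k$ to the pointwise requirement
\[
(\lambda_i/\lambda_k)\,c^{i-k} \leq 1 \quad \text{for every } i \neq k,
\]
because under this bound each individual term on the right for $T_\lambda[P]$ is dominated by the corresponding term for $P$ after canceling the common factor $\lambda_k c^k$, and then taking the $\max$ (resp.\ the sum) transfers the inequality verbatim. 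Writing $\mu_i=\log\lambda_i$, the existence of a suitable $c$ is equivalent to the two-sided bound
\[
\max_{i<k}\;\frac{\mu_i-\mu_k}{k-i}\;\leq\;\log c\;\leq\;\min_{j>k}\;\frac{\mu_k-\mu_j}{j-k}
\]
having a solution for every $k$, and this is exactly the statement that the secant slopes of $\mu$ through $k$ are decreasing, i.e., $\mu$ is concave, i.e., $\lambda$ is log-concave.

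For the converse implications, I would use trinomial test polynomials supported on three consecutive exponents $k-1, k, k+1$. For $(2)\Rightarrow(1)$, the polynomial $P(x)=x^{k-1}+x^k+x^{k+1}$ has $x_k=1$ as a tropical witness; on the other hand, $k$ is a tropical index of $T_\lambda[P]$ iff some $y>0$ satisfies $\lambda_{k-1}/\lambda_k\leq y\leq \lambda_k/\lambda_{k+1}$, which is equivalent to $\lambda_k^2\geq\lambda_{k-1}\lambda_{k+1}$. For $(3)\Rightarrow(1)$ this trinomial no longer detects the obstruction, so I would instead use the borderline polynomial $P(x)=\tfrac{1}{2}x^{k-1}+x^k+\tfrac{1}{2}x^{k+1}$, for which $x_k=1$ realizes the central-index inequality with equality. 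The AM--GM inequality forces $\min_{y>0}\tfrac{1}{2}(\lambda_{k-1}y^{-1}+\lambda_{k+1}y)=\sqrt{\lambda_{k-1}\lambda_{k+1}}$, so $k$ remains a central index of $T_\lambda[P]$ exactly when $\lambda_k^2\geq\lambda_{k-1}\lambda_{k+1}$. Running this over all $k$ produces log-concavity.

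I do not foresee a substantial obstacle: both directions reduce to standard facts about concave functions and the AM--GM inequality. The point worth underlining is the conceptual one, namely that the intrinsic log-concavity of $\lambda$ is equivalent to the extrinsic invariance of the index structure under the diagonal action of $T_\lambda$, with both directions mediated by the single-parameter rescaling $y_k=c\,x_k$. The only minor care required is with the boundary cases where some $a_i=0$ (in which the corresponding rescaling constraint becomes vacuous since $T_\lambda$ preserves the support of $P$) or $x_k=0$ (in which $y_k=0$ works directly as a witness for $T_\lambda[P]$).
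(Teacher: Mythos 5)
Your proof is correct and follows essentially the same route as the paper: the forward implications use the same multiplicative rescaling of the witness (your two-sided secant-slope condition on $\log c$ is just an inlined version of the paper's Lemma~\ref{lem:LogConcaveTropical} on tropical real-rootedness of $S_\lambda^{\{d\}}$), and the converses use the same trinomial tests (the paper takes $1+x+\dots+x^d$ for the tropical case and $x^{m-1}+2x^m+x^{m+1}$ for the central case, the latter identical to your balanced trinomial up to a factor of $2$).
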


In particular, Theorem~\ref{thm:TropicalIndexPreserver} provides an alternative (and elementary) way  to settle  \cite[Theorem 1]{PRS11}
as requested in 
Problem 2 of \emph{loc.\ cit.}
\begin{cor}
\label{cor:SignIndependently}
A positive sequence $ \lambda$ preserves the set of sign-independently 
real-rooted polynomials if and only if it is log-concave.
\end{cor}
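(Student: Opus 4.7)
My plan is to derive the corollary as essentially a dictionary translation between Proposition~\ref{pro:SignIndependently} and Theorem~\ref{thm:TropicalIndexPreserver}, with only a short trinomial computation remaining for the necessity direction.

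For sufficiency, I would start with a log-concave positive sequence $\lambda$ and a sign-independently real-rooted polynomial $P$ of degree $d$. By Proposition~\ref{pro:SignIndependently}, every $k \in \{0,\dots,d\}$ is a central index of $P$. Theorem~\ref{thm:TropicalIndexPreserver} gives (1)$\Rightarrow$(3), so $\lambda$ is a central index preserver, and thus every such $k$ is still a central index of $T_\lambda[P]$. Since $\lambda_d>0$, the image has degree exactly $d$, so Proposition~\ref{pro:SignIndependently} applies in the reverse direction and yields sign-independent real-rootedness of $T_\lambda[P]$.

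For necessity, I would argue contrapositively and reduce to trinomials. Assume $\lambda_k^2 < \lambda_{k-1}\lambda_{k+1}$ for some $k\geq 1$. The natural test polynomial is $P_k(x) = x^{k-1}(1+x)^2 = x^{k-1} + 2x^k + x^{k+1}$. Any coefficient-sign change of $P_k$ is of the form $x^{k-1}(\varepsilon_0 + 2\varepsilon_1 x + \varepsilon_2 x^2)$ with $\varepsilon_i = \pm 1$, whose discriminant $4 - 4\varepsilon_0\varepsilon_2 \in \{0,8\}$ is nonnegative, so $P_k$ is sign-independently real-rooted. On the other hand,
\[
T_\lambda[P_k](x) = x^{k-1}\bigl(\lambda_{k-1} + 2\lambda_k x + \lambda_{k+1}x^2\bigr)
\]
has discriminant $4(\lambda_k^2 - \lambda_{k-1}\lambda_{k+1}) < 0$, so $T_\lambda[P_k]$ is not even real-rooted. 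This contradicts the assumption that $\lambda$ preserves sign-independent real-rootedness, forcing log-concavity.

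The only real work is already packaged into Theorem~\ref{thm:TropicalIndexPreserver}; the two remaining ingredients (the central-index characterization of sign-independent real-rootedness and the trinomial witness) are routine. The point to be careful about is that $T_\lambda[P]$ retains degree $d$ in the sufficiency argument, which is immediate from positivity of $\lambda$.
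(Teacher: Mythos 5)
Your proof is correct and follows essentially the same route as the paper: combine Proposition~\ref{pro:SignIndependently} with the central-index-preserver characterization in Theorem~\ref{thm:TropicalIndexPreserver}. The only difference is that for necessity you re-derive the trinomial witness $x^{k-1}+2x^k+x^{k+1}$ explicitly rather than citing the implication (3)$\Rightarrow$(1) of the theorem --- this is the very same trinomial the paper uses inside its proof of that implication, so the content is identical (and your version is, if anything, slightly more careful than the paper's terse ``if and only if it preserves central indices'' phrasing).
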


In what follows, we will need a slightly more general definition of
a tropicalization of $P$.
Given an arbitrary triangular sequence  
$ \lambda=\{\la_{k,j}\}_{0\le k\le j,\; j\in \bN} $ of positive 
numbers, and a univariate polynomial $P(x)=\sum_{i=0}^d a_ix^i$  of any degree $d$,
we define its {\it $\la$-tropicalization}  as 
\begin{equation}\label{eq:Latrop}
tr^\la_P(\xi)=\max_{0\le k\le d}( k\xi +\ln |a_k|+\ln \la_{k,d}),\; \xi\in \bR.
\end{equation}

\begin{rem}{\rm 
	Here is another description of $tr^\la_P(\xi)$. Let $\Theta_d(u)$ be a continuous piecewise linear function on $[0,d]$, linear on intervals $[k,k+1]$ for $k=0,...,d-1$ and such that $\Theta_d(k)=\log \lambda_{k,d}$ for $k=0,...,d$. Define $A^\lambda_P(u)$ as the least concave majorant of  $A_P(u)+\Theta_d(u)$. Then $tr^\lambda_P$ is the Legendre transform of $-A^\lambda_P(u)$, see Remark~2.} 
\end{rem}

\begin{defi}
{\rm A finite sequence $\{\la_{k,d}\}_{0\le k\le d},$ of positive numbers is 
called 
a \emph{ degree $d$ {\rm (}positive{\rm)} real-to-tropical root preserver}
if for any polynomial $P$ of degree $d$ (with positive coefficients), 
the number of essential tropical roots of 
\eqref{eq:Latrop} is greater than or equal to the number of non-zero real roots of $P$. 
A triangular sequence $ \lambda=\{\la_{k,j}\}_{0\le k\le j,\; j\in \bN}$ is called 
a \emph{{\rm (}positive{\rm)} real-to-tropical root preserver}
if for each $d$ its finite subsequence $\{\la_{k,d}\}_{0\le k\le d},$ is  
a \rm  degree $d$ (}positive{\rm) real-to-tropical root preserver.
}
\end{defi}

We recall  that the \emph{recession cone}
of a set $X\subset \bR^{d+1}$ is the largest pointed (i.e. including the origin) cone  $C\subseteq \bR^{d+1}$ 
such that if $x\in X$ then $x+c\in X$ for all $c\in C$. Our main result is as follows. 

\begin{tm}\label{th:main} The set $\Lambda_d\subset \bR_+^{d+1}$ 
{\rm(}respectively $\Lambda_d^+\subset \bR_+^{d+1}${\rm)} of all
  degree $d$ {\rm(}positive{\rm)} real-to-tropical root preservers $\{\la_{k,d}\}_{0\le k\le d}$  
 is a nonempty closed full-dimensional subset 
of\/ $\bR_+^{d+1}$. 
Moreover, the recession cone of its logarithmic image $\Ln(\Lambda_d)$
{\rm(}respectively $\Ln(\Lambda^+_d)${\rm)} 
coincides with the cone of all  concave sequences of length $d+1$. {\rm(}Here  for any $\Omega\subset \bR_+^k$, by $\Ln(\Omega)$ we mean the set in $\bR^k$ obtained by taking natural logarithms of points from $\Omega$ coordinatewisely.\rm{)}  
\end{tm}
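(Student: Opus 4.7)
The plan is to establish the four assertions in the order: nonemptiness, closedness, recession cone characterization, and full-dimensionality. The main obstacle is nonemptiness, which requires exhibiting a single sequence that works uniformly over all real polynomials of degree $d$. A natural candidate is a strongly log-concave sequence such as $\lambda^*_{k,d}=e^{-Ck^2}$ with $C=C(d)$ sufficiently large; the goal is then to show that each non-zero real root of any $P$ of degree $d$ can be accounted for by a distinct essential tropical root of $tr_P^{\lambda^*}$. I expect this to follow from a careful Archimedean localization of real roots near tropical roots of $tr_P^{\lambda^*}$, combined with Descartes' rule of signs applied between consecutive tropical indices.

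For closedness, it suffices to show that, for each fixed polynomial $P$, the essential tropical root count $E(P,\lambda)$ is upper semicontinuous in $\lambda$. The tropical index set of $tr_P^\lambda$ enlarges only at degenerate (collinear) configurations of the Archimedean Newton polygon, and a short parity/sign case analysis shows that inserting one extra tropical index between two consecutive existing ones never decreases $E$. Hence each set $\{\lambda : E(P,\lambda)\geq \#\{\text{real roots of }P\}\}$ is closed, and $\Lambda_d$ (respectively $\Lambda_d^+$) is closed as an intersection over all $P$ of such sets. Full-dimensionality will then follow at the end, since a nonempty closed set invariant under translation by a full-dimensional cone must itself be full-dimensional in the ambient $\bR^{d+1}$.

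The recession cone is characterized in two steps. For the inclusion that concave sequences lie in the recession cone, let $c$ be a concave sequence and $\lambda\in\Lambda_d$; then $e^{tc}$ is log-concave for every $t\geq 0$, so by Theorem~\ref{thm:TropicalIndexPreserver} the tropical indices of $tr_P^{\lambda\cdot e^{tc}}$ contain those of $tr_P^\lambda$, and the same case analysis as above yields $E(P,\lambda\cdot e^{tc})\geq E(P,\lambda)\geq \#\{\text{real roots of }P\}$, so $\lambda\cdot e^{tc}\in\Lambda_d$. For the reverse inclusion, suppose $c$ fails concavity at some interior index $k$, i.e.\ $2c_k<c_{k-1}+c_{k+1}$. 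Take the test polynomial $P(x)=(1-x)^d$, which has $d$ real roots and, by log-concavity of the binomial coefficients, admits $d$ essential tropical roots in its plain tropicalization (each of the $d$ unit-length gaps contributes a sign change in $(a_j)_j$). For $t$ large enough the concavity defect dominates, so the point $(k,\ln|a_k|+\ln\lambda_k+tc_k)$ falls strictly below the chord joining its two neighbours and $k$ leaves the tropical index set. The resulting merged gap $(k-1,k+1)$ is non-essential because it has even length and $\sgn(a_{k-1})=\sgn(a_{k+1})$, so $E(P,\lambda\cdot e^{tc})\leq d-2<d$. This contradicts $c$ being a recession direction, completing the characterization.
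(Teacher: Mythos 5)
The closedness, recession-cone, and full-dimensionality parts of your argument are essentially sound. Upper semicontinuity of the essential-root count in $\lambda$ (via the fact that enlarging the tropical index set cannot decrease the number of sign changes in either of the two sign sequences) is the right mechanism for closedness, and your reverse recession-cone argument with $(1-x)^d$ is a correct and in fact more explicit instantiation of what the paper only sketches; note, though, that for $\Lambda_d^+$ you must use $(1+x)^d$ instead, since $(1-x)^d$ does not have positive coefficients and so is not a legitimate test polynomial for the positive version. The forward inclusion via Theorem~\ref{thm:TropicalIndexPreserver} matches the paper.

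The genuine gap is in nonemptiness, which is the heart of the theorem. You name the candidate $\lambda^*_{k,d}=e^{-Ck^2}$ and then write that you \emph{expect} the required comparison between the real roots of $P$ and the essential tropical roots of $tr_P^{\lambda^*}$ to follow from an ``Archimedean localization of real roots near tropical roots'' combined with Descartes' rule. That is a statement of intent, not an argument: Descartes' rule controls sign changes of the full coefficient sequence, not of its restriction to the tropical index set, and localizing \emph{all} real roots of an arbitrary degree-$d$ polynomial near the tropical roots, uniformly in $P$, is precisely the hard quantitative content. In the paper that quantitative route is Theorem~\ref{thm:example}, whose proof occupies an entire section (the perturbed-binomial Lemmas, the generalized Rolle operators $L_k$, and a clustering of tropical roots into $\log 36d$-neighborhoods). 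The paper's proof of Theorem~\ref{th:main} itself sidesteps all of this with a compactness argument: identify $P$ with a point of $\RP^d$; for a fixed strictly log-concave $\lambda$ and each $P$, show that some power $\lambda^{s(P)}$ works on a neighborhood $N(P)$, because under the rescaling $\xi\mapsto s\xi$ the tropical root pattern of $tr_P^{\lambda^s}$ converges to the Descartes sign-change pattern of $P$ (with a separate inductive treatment of the case $a_0=0$); then a finite subcover of $\RP^d$ together with the fact that $\lambda^{s^*-s(P_i)}$ is a tropical index preserver (Theorem~\ref{thm:TropicalIndexPreserver}) produces a single $\lambda^{s^*}\in\Lambda_d$. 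To complete your proposal you would need either to reproduce an argument of that kind or to actually carry out the uniform estimates you allude to.
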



Theorem~\ref{th:main} shows that there exist large families  of real-to-tropical root preservers
in each degree, and therefore large families  of real-to-tropical root preserving 
triangular sequences.

\medskip

First we show that, if $\la=\{\lambda_{k,d}\}_{0\le k\le d}$ is sufficiently log-concave, then $\lambda$ is a degree $d$ real-to-tropical root preserver:
\begin{tm}\label{thm:example}
	Assume that a sequence $\la=\{\lambda_{k,d}\}_{0\le k\le d}$ of positive numbers satisfies the condition: 
	\begin{equation}\label{eq:example}
	\log \frac{\lambda^2_{k,d}} {\lambda_{k-1,d}\lambda_{k+1,d}}>2\Delta_d:=\frac{d^2}4\log 36d+(d+1)\log d+\log 4, \quad 1\le k \le d-1.
	\end{equation}
	Then, for any real polynomial $P$, the number of positive (negative) tropical roots of $tr^\la_P$ is greater than or equal  to the number of positive (negative) roots of $P$. In particular, $\la$ is a real-to-tropical root preserver. 
\end{tm}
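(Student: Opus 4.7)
Set $Q:=T_\la[P]=\sum\la_{i,d}a_ix^i$, so that $tr^\la_P=tr_Q$. Let $k_0<k_1<\cdots<k_m$ be the tropical indices of $Q$ with tropical roots $\zeta_1<\cdots<\zeta_m$ and dominance intervals $I_j=(\zeta_j,\zeta_{j+1})$ (with the convention $\zeta_0=-\infty$, $\zeta_{m+1}=+\infty$). Unwinding the definitions of essential tropical roots, the number of positive essential tropical roots of $tr^\la_P$ equals the number of sign alternations in the sequence $\sgn(a_{k_0}),\sgn(a_{k_1}),\ldots,\sgn(a_{k_m})$. The negative statement reduces to the positive one via the substitution $x\mapsto -x$, so I would treat only positive roots.

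The heart of the argument would be a \emph{quantitative lopsidedness lemma}: \eqref{eq:example} guarantees that each interval $I_j$ contains a point $\xi_j^\star$ at which $P$ itself is lopsided with dominant index $k_j$, i.e.
\[
|a_{k_j}|\,e^{k_j\xi_j^\star}>\sum_{i\ne k_j}|a_i|\,e^{i\xi_j^\star}, \quad\text{equivalently}\quad \sum_{i\ne k_j}\frac{\la_{k_j}}{\la_i}\,e^{-M_i^Q(\xi_j^\star)}<1,
\]
where $M_i^Q(\xi):=(k_j-i)\xi+\log(\la_{k_j}|a_{k_j}|)-\log(\la_i|a_i|)$ is the $Q$-dominance margin of $k_j$ over $i$. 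To prove it I would apply a discrete maximum principle: from the hypothesis $2\log\la_k-\log\la_{k-1}-\log\la_{k+1}>2\Delta_d$ one deduces $\log\la_k\ge\frac{j-k}{j-i}\log\la_i+\frac{k-i}{j-i}\log\la_j+\Delta_d(k-i)(j-k)$ for all $i<k<j$, which quantifies how much the $M_i^Q$'s beat the log-ratios $\log(\la_{k_j}/\la_i)$. Taking $\xi_j^\star$ to be the midpoint of $I_j$ (for the boundary intervals $I_0$ and $I_m$, a point pushed far enough outward to dominate the extreme monomials at indices $0$ or $d$) then bounds each term $(\la_{k_j}/\la_i)\exp(-M_i^Q(\xi_j^\star))$ individually, and the specific form of $\Delta_d$ in \eqref{eq:example} is calibrated so that the sum of $d$ such contributions remains strictly below $1$ in the worst case.

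Granted the lemma, strict lopsidedness of $P$ at $\xi_j^\star$ forces $\sgn(P(e^{\xi_j^\star}))=\sgn(a_{k_j})$ and extends to an open neighbourhood of $\xi_j^\star$ inside $I_j$. The positive real roots of $P$ are therefore confined to ``transition zones'' lying strictly between $\xi_{j-1}^\star$ and $\xi_j^\star$ around each tropical root $\zeta_j$. On each transition zone the same estimate shows that $P$ is well approximated by the binomial $a_{k_{j-1}}x^{k_{j-1}}+a_{k_j}x^{k_j}$, and a Rouch\'e-style argument yields at most one positive real root in the zone, with exactly one occurring when $\sgn(a_{k_{j-1}})\ne\sgn(a_{k_j})$. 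Summing over $j$ gives the desired inequality between positive real roots of $P$ and positive essential tropical roots of $tr^\la_P$.

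The \textbf{main obstacle} will be the quantitative lopsidedness lemma itself. The adjacent contributions ($i=k_{j\pm 1}$) are controlled directly by the tropical-root spacing and by neighbouring log-ratios of $\la$; the delicate point is a uniform bound over all distant indices $i$, for which both $\la_{k_j}/\la_i$ and $M_i^Q(\xi_j^\star)$ depend on the global concavity profile of $\log\la$ and on the possibly near-critical coefficients $|a_i|$. Special care for the boundary intervals $I_0$ and $I_m$---where no midpoint is available and the extreme monomials must be dominated on a half-infinite side---is what forces $\Delta_d$ to scale roughly like $d^2\log d$ rather than merely be a constant.
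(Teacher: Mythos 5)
Your plan hinges on a \emph{quantitative lopsidedness lemma}: that every dominance interval $I_j$ of $tr^\la_P=tr_Q$ contains a point where $P$ \emph{itself} is lopsided with dominant index $k_j$. This lemma is false, and with it the whole localization of the real roots of $P$ to ``transition zones'' around the tropical roots of $tr^\la_P$ collapses. Concretely, take $\la_{k,d}=e^{-Ck^2}$ with $C>\Delta_d$ (so \eqref{eq:example} holds) and $P(x)=\sum_{k=0}^d e^{(C-\eps)k^2}x^k$ for small $\eps>0$. Then $\la_{k,d}|a_k|=e^{-\eps k^2}$ is log-concave, so every $k$ is a tropical index of $Q$ and every $I_j$ is a nonempty interval of width $2\eps$; but the coefficients of $P$ are strictly log-\emph{convex}, so the only tropical (hence the only central) indices of $P$ are $0$ and $d$, and $P$ is lopsided with dominant index $k_j$ at \emph{no} point whatsoever for $0<k_j<d$. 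The underlying quantitative failure is the one you flag yourself: at the midpoint of $I_j$ the $Q$-margin $M_i^Q$ over a neighbouring index is only of order (gap of $k$'s)$\times$(half-width of $I_j$), which can be arbitrarily small, while the log-ratio $\log(\la_{k_j,d}/\la_{i,d})$ it must beat is of order $\Delta_d$; making $\la$ \emph{more} log-concave only worsens this comparison. A second, independent problem is your Rouch\'e step in a zone where $\sgn(a_{k_{j-1}})=\sgn(a_{k_j})$: to exclude a pair of real roots there you must dominate the intermediate and external monomials of $P$ on the boundary of a complex domain, and the external ones are controlled by the Newton diagram of $P$, not of $Q$, so again the geometry of $tr_P$ (not $tr^\la_P$) is what matters.

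The paper's proof is organized around exactly this distinction. Real roots of $P$ are confined to $\log 36d$-neighbourhoods of the tropical roots of the \emph{unweighted} $tr_P$ (Lemma~\ref{lem:slopes}); within each such cluster, which may contain several sign changes among the tropical indices of $tr^\la_P$, one cannot argue binomial-by-binomial. Instead the paper applies the operators $L_k(P)=x^{k+1}(x^{-k}P)'$ once per sign change (generalized Rolle, Lemma~\ref{lem:Rolle}) to produce a polynomial $Q=L_{m_1}\cdots L_{m_M}(P)$ with no remaining sign changes on the relevant range, and then shows $Q$ has no roots in the cluster via Lemma~\ref{lem:perturbed binomial positive}; the size of $\Delta_d$ is calibrated so that coefficients at non-tropical indices of $tr^\la_P$ stay negligible after the $\prod_k|j-m_k|\le d^d$ distortion introduced by the $L$-operators and across a cluster of diameter up to $O(d\log 36d)$. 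If you want to salvage your outline, you would have to (i) replace the dominance intervals of $tr^\la_P$ by the clusters of tropical roots of $tr_P$ as the localization device, and (ii) add a mechanism, such as the generalized Rolle argument, to convert ``$M$ sign changes among the surviving tropical indices inside one cluster'' into ``at most $M$ real roots in that cluster''; neither ingredient is present in the proposal.
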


Next we show that to be a real-to-tropical root preserver, the sequence $\la=\{\lambda_{k,d}\}_{0\le k\le d}$ should be sufficiently log-concave. 
\begin{tm}\label{thm:counterexample}
There exists  $c>0$ with the following property.	Assume that for some $k<d-100$ 
\begin{equation}\label{eq:counterexample}
\log \frac{\lambda^2_{j,d}} {\lambda_{j-1,d}\lambda_{j+1,d}}<2c, \quad {j=k,...,k+100}.
\end{equation}
  Then there exists a polynomial $P$ of degree $d$ with positive coefficients such that $tr^\lambda_P$ has three tropical roots, and $P$ has four negative roots. In particular, $\{\lambda_{k,d}\}_{0\le k\le d}$ cannot be a degree $d$ (positive) real-to-tropical root preserver.
\end{tm}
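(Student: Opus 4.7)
My plan is to construct an explicit counterexample polynomial depending on the given sequence $\lambda$. The polynomial $P$ will have degree $d$, positive coefficients, and four negative real roots, while its $\lambda$-weighted log-coefficient profile will have a concave majorant with exactly four corners, giving only three tropical roots.

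I would begin with the seed $R(x) = \prod_{i=1}^{4}(x + r_i)$ of degree four with four distinct positive reals $r_i$ close to $1$, which has four simple negative real roots and positive, strictly log-concave coefficients by Newton's inequality. Next, for an even integer $n$ with $n + 4 \le 100$ and $B = B(\lambda) > 0$ to be chosen, set
\[
  P_{0}(x) \;=\; x^{k}\,R(x)\,\bigl(1 + B\,x^{n}\bigr).
\]
Since $n$ is even and $B > 0$, the factor $1 + B x^{n}$ contributes no real roots, so $P_{0}$ still has exactly four negative real roots, all simple. Its coefficient support sits in $[k,k+4]\cup[k+n,k+n+4]\subset[k,k+100]$. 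Finally let
\[
  P(x) \;=\; P_{0}(x) + \varepsilon\sum_{i=0}^{d}x^{i}
\]
for $\varepsilon = \varepsilon(\lambda,d) > 0$ sufficiently small. The four simple negative real roots of $P_{0}$ persist under this perturbation by the implicit function theorem, while the degree of $P$ equals $d$ and all of its coefficients are strictly positive.

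The tropical analysis is where the hypothesis on $\lambda$ enters. On the window $[k,k+100]$ the second differences of $\log\lambda_{i,d}$ are bounded below by $-2c$, so $\log\lambda_{i,d}$ deviates from its best affine interpolant on the window by at most $O(n^{2}c)$. Consequently the $\lambda$-weighted log-coefficients on each of the two bumps of $P_{0}$ look like the log-concave bump of $R$, shifted by an affine function, plus an error of magnitude $O(n^{2}c)$. Choose $B(\lambda)$ so that the two $\lambda$-weighted bump peaks have equal height $H$; then the horizontal peak-to-peak chord at height $H$ strictly exceeds the shoulder values of both bumps, provided $c$ is smaller than a universal constant chosen so that $O(n^{2}c)$ stays safely under the Newton-inequality drop $\log(6/4)\approx 0.405$ from each peak to its immediate shoulders. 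Taking $\varepsilon$ small enough in terms of the whole sequence $\lambda_{0,d},\ldots,\lambda_{d,d}$ ensures that the $\varepsilon$-tail values at the intermediate and far-outside indices all lie strictly below this hull, while the two extreme points $i = 0$ and $i = d$ contribute two further hull corners. The concave majorant therefore has exactly four corners at $i = 0$, $k+2$, $k+n+2$, and $d$, giving three tropical roots.

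The hardest step is the tropical analysis: since each bump is strictly log-concave by Newton, in isolation each bump contributes all five of its own indices to its local upper hull; it is only the cross-bump comparison through the matched-peak chord that submerges the shoulder indices below the global hull. Achieving this cross-bump submersion requires both the precise choice of $B(\lambda)$ adapted to the values of $\lambda$ at the two peak positions and a smallness assumption on $c$ (roughly, $c$ must be small enough that the affine residual on a window of length $n\le 100$ cannot absorb the Newton drop). A secondary, more routine, obstacle is the unrestricted behavior of $\lambda$ outside the window, which is handled by taking $\varepsilon$ very small relative to $\max_{i}\lambda_{i,d}$, pushing all $\varepsilon$-tail contributions safely below the two bump peaks.
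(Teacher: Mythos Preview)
Your construction has a genuine gap in the tropical analysis: the claim that the concave majorant has exactly four vertices at $0$, $k+2$, $k+n+2$, $d$ is incorrect.

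Because your seed $R(x)=\prod_{i=1}^4(x+r_i)$ has four distinct real roots, Newton's inequalities force its five coefficients to be \emph{strictly} log-concave. Hence on each bump every one of the five points $(j,\log|a_j|)$ is a vertex of the local upper hull. The peak-to-peak chord from $(k+2,H)$ to $(k+n+2,H)$ lives on the interval $[k+2,k+n+2]$ and can only submerge the \emph{inner} shoulders $k+3,k+4,k+n,k+n+1$; it says nothing about the \emph{outer} shoulders $k,k+1$ and $k+n+3,k+n+4$. For those you would need the edge $0\to k+2$ of the global hull to dominate the point at $k+1$, which forces the slope $(H-L_0)/(k+2)$ to be at most $\log(6/4)\approx 0.405$, i.e.\ $L_0=\log(\lambda_{0,d}\varepsilon)\ge H-0.405(k+2)$. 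This is a \emph{lower} bound on $\varepsilon$, in direct tension with your instruction to take $\varepsilon$ ``small enough'' so that the $\varepsilon$-tail lies below the hull. With $\varepsilon$ small the hull in fact picks up $k,k+1$ on the left and $k+n+3,k+n+4$ on the right, yielding eight vertices and seven tropical roots --- not a counterexample. A secondary but related problem is that $\varepsilon\sum_i x^i$ puts floor points at every index, and since $\lambda$ is completely unrestricted outside the window $[k,k+100]$ you cannot guarantee these stay below the hull either.

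The paper circumvents both issues by building a degree-$100$ polynomial $R$ with four simple negative roots whose log-coefficient profile lies \emph{strictly below} its own chord (equality only at the two endpoints), so that $A^\lambda_{x^kR}$ already has a linear majorant on $[k,k+100]$ and contributes only two hull vertices. The extension to degree $d$ is then done by adding just two monomials $\delta(1+x^d)$, which adds at most two further vertices, giving at most three tropical roots. The point is that a degree-four real-rooted seed is intrinsically too log-concave to permit a linear majorant; one needs the room afforded by a high-degree $R$ whose interior coefficients are genuinely smaller than the endpoint coefficients, and this is exactly what the paper's Lemma (the construction of $R$ via iterated products $Q_{k+1}=Q_k\cdot(x^n+1)$ followed by a small perturbation) provides.
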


\medskip
In this direction, we present the following tantalizing conjecture. Consider the sequence $\la^\dagger$ given by
  $$\la^\dagger_k:=e^{-k^2},\;k=0,1,\dots.$$
 we will denote by 
${tr}_P^\dagger(\xi)$ the corresponding tropical polynomial 
associated to any real polynomial $P$, i.e.
\begin{equation}\label{eq:Latropdagge}
tr^\dagger_P(\xi)=\max_{0\le k\le d}( k\xi +\ln |a_k|-k^2),\; \xi\in \bR.
\end{equation}

\begin{conj}[Conjectural tropical analog of Descartes' rule of signs]\label{conj:main} For any real univariate polynomial $P(x)$, the number of its positive (negative) roots does not
exceed the number of positive (negative) essential tropical roots of $tr^\dagger_P(\xi)$.
\end{conj}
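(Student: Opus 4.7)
The plan is to begin with two reductions. First, the substitution $x\mapsto -x$ preserves the weights $e^{-k^2}$ and simply interchanges positive and negative essential tropical roots, so it suffices to establish the bound for positive roots. Second, by perturbing $P$ I may assume it is generic in the sense that all $a_k\neq 0$ and all tropical roots of $tr^\dagger_P$ are simple, i.e., consecutive tropical indices $k_0<k_1<\cdots<k_m$ differ by exactly $1$. On this stratum the positive essential tropical root count equals the number of sign changes in $(\sgn(a_{k_i}))_{i=0}^m$, and the nongeneric cases follow by upper semicontinuity, since coincident tropical roots merely lump sign changes together without cancelling them.

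The natural strategy is then induction on $d$ using the Euler operator $\theta=x\partial_x$, with the cases $d\le 4$ already settled in the paper providing the base. Two observations are crucial: $\theta$ commutes with $T_{\lambda^\dagger}$, because both act diagonally on monomials; and by Rolle's theorem $\theta P$ has at least $n-1$ positive roots whenever $P$ has $n$. Applying the inductive hypothesis to $\theta P$ produces at least $n-1$ positive essential tropical roots of $tr^\dagger_{\theta P}$. What remains is a combinatorial transfer lemma: the positive essential tropical root count of $tr^\dagger_P$ is at least one more than that of $tr^\dagger_{\theta P}$, which in particular accounts for the invariable loss of the index $0$ when passing from $P$ to $\theta P$.

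The main obstacle is precisely this transfer lemma. The tropical indices of $\theta P$ are governed by the concave hull of $(k,\ \ln k+\ln|a_k|-k^2)$ rather than of $(k,\ \ln|a_k|-k^2)$, so the perturbation $\ln k$ can promote or demote individual indices on the hull. Unlike in Theorem~\ref{thm:example}, where enormous log-concavity lets a single monomial dominate each tropical region, the sequence $\lambda^\dagger$ has constant log-concavity ratio $e^2$, so no monomial dominates uniformly and the sign-change combinatorics must be controlled delicately. The structural hint I would exploit is the Gaussian identity $-(k-c)^2+c^2=-k^2+2ck$, which says that evaluating $tr^\dagger_P$ at $\xi=2c$ amounts, up to additive constants, to recentering the Gaussian weight around $k=c$; this should allow any local analysis near a tropical index $k^*$ to be reduced to a classical Newton-polytope question for $P$ near slope $2k^*$, where the offending $\ln k$ becomes subleading to the log-concavity increment $2$.

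Should this direct transfer argument resist a clean execution, a fallback is a continuous deformation in the family $\lambda^{(t)}_k=e^{-tk^2}$, whose log-concavity ratio is $2t$. For $t$ sufficiently large (depending on $d$), Theorem~\ref{thm:example} yields the conclusion; as $t$ decreases to $1$ the Gaussian flattens and tropical indices can only leave the concave hull of $(k,\ \ln|a_k|-tk^2)$, never enter. At each threshold value $t_0$ where a single index $k_i$ is lost, one performs a case analysis on the sign pattern at $k_{i-1},k_i,k_{i+1}$, seeking to show that essential tropical roots can disappear only in ways compensated by the fixed real-root count. Making this precise without circular bookkeeping appears to be the principal technical difficulty of the deformation route as well, which is why I expect the induction via $\theta$ together with the Gaussian recentering trick to be the more promising line of attack.
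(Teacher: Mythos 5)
This statement is Conjecture~\ref{conj:main}, which the paper leaves open: the authors prove it only for $d\le 4$ (Proposition~\ref{pr:6}, via the explicit inequalities of Lemmas~\ref{lm:ExplicitSets} and~\ref{lem:degree4}), and they prove weaker surrogates (Theorems~\ref{thm:example} and~\ref{thm:counterexample}) for general $d$. Your proposal is a strategy sketch rather than a proof, and you say so yourself: the entire argument hinges on a ``transfer lemma'' that you identify as the main obstacle and do not establish. Unfortunately the transfer lemma is not merely unproven --- it is false as stated. Take $P=1+x$: the tropical indices of $tr^\dagger_P$ are $0,1$, the single tropical root has odd gap and sign data $\{+,+\}$, $\{+,-\}$, so it is a negative essential root and the positive essential count is $0$; meanwhile $\theta P=x$ is a monomial with positive essential count $0$, so the claimed inequality $0\ge 0+1$ fails. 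The same happens for $P=1+x+x^2$. This is exactly the failure mode of the naive ``Rolle plus one'' induction for the classical Descartes rule, where one must additionally compare $\sgn(a_0)$ with the sign of the lowest coefficient of $P'$ and argue about the sign of $P$ near $0$ and $+\infty$; you supply no tropical analog of that boundary/parity argument, and here it is genuinely harder because, as you note, $\theta$ perturbs the concave hull by $\ln k$ and can change the set of tropical indices. Any conditional repair of the lemma would have to resolve precisely this, so the induction does not close.

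A second, independent gap is the genericity reduction. Simplicity of the tropical roots of $tr^\dagger_P$ (in the paper's sense of multiplicity) does \emph{not} mean that consecutive tropical indices differ by $1$: for $1\pm x^2$ the indices are $0$ and $2$ and the unique tropical root is simple, yet the essential count is $0$ for $1+x^2$ and $2$ for $1-x^2$. One cannot force the intermediate index $1$ onto the Archimedean Newton polytope by a small perturbation of $a_1$ (it would require $|a_1|\ge e^{-1}$ here), so the stratum ``all integers $0,\dots,d$ are tropical indices'' is not dense and the conjecture cannot be reduced to it. Moreover the essential-root count is not upper semicontinuous in the way you assert, precisely because even gaps may produce non-essential roots that contribute nothing; the positive/negative bookkeeping across even gaps is the combinatorial heart of the definition and cannot be perturbed away. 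The deformation fallback through $\lambda^{(t)}_k=e^{-tk^2}$ faces the same unresolved case analysis at each threshold $t_0$, again as you acknowledge. In short, both routes stop exactly where the paper's own difficulties begin, and neither constitutes a proof of the conjecture.
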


We have the following  partial result supporting Conjecture~\ref{conj:main}. 


\begin{prop}\label{pr:6}
Conjecture~\ref{conj:main} holds for $d\le 4$.
\end{prop}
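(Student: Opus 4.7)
The plan is to reduce to polynomials $P=\sum_{i=0}^d a_ix^i$ with all $a_i\ne 0$ via a standard limiting argument, and by the symmetry $x\mapsto -x$ to treat only positive roots. Since $a_0,a_d\ne 0$, the set $I=\{k_0<\cdots<k_m\}\subseteq\{0,\dots,d\}$ of tropical indices of $tr_P^\dagger$ always contains $\{0,d\}$. The key quantitative fact, coming from the concave-majorant description of the tropical indices of $tr^\dagger_P$, is that for $k\notin I$ with adjacent tropical indices $k',k''\in I$, $k'<k<k''$, and $\alpha=(k''-k)/(k''-k')$, one has
\[
|a_k|\,x^k < e^{-\alpha(1-\alpha)(k''-k')^2}\,\bigl(|a_{k'}|\,x^{k'}\bigr)^{\alpha}\bigl(|a_{k''}|\,x^{k''}\bigr)^{1-\alpha}\qquad(x>0).
\]
For $d\le 4$ a direct inspection shows that the smallest value of the exponent $\alpha(1-\alpha)(k''-k')^2$ arising is $1$ (attained exactly when $k''-k'=2$).

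I would then split into two cases. When $I=\{0,1,\dots,d\}$, all gaps are $1$ (odd), so by the very definition of essential tropical roots the positive essential tropical root count equals the number of sign alternations of $(a_0,\dots,a_d)$, and the required inequality is classical Descartes' rule of signs. When $I\subsetneq\{0,\dots,d\}$, introduce the \emph{tropical skeleton} $Q(x):=\sum_{k\in I}a_kx^k$; applying weighted AM-GM to the displayed bound yields, for every $k\notin I$ and $x>0$,
\[
|a_k|\,x^k \le e^{-\alpha(1-\alpha)(k''-k')^2}\bigl(\alpha\,|a_{k'}|\,x^{k'}+(1-\alpha)\,|a_{k''}|\,x^{k''}\bigr).
\]
Summing over $k\notin I$ produces a bound of the form $|P(x)-Q(x)|\le C\sum_{k\in I}|a_k|\,x^k$ for $x>0$ with an explicit $C<1$. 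Interval-by-interval between tropical corners this majorization forces $N_+(P)\le N_+(Q)$, reducing the conjecture to a statement about $Q$, a polynomial whose support is precisely the sparse set $I$.

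It then remains to verify, for each of the at most $2^{d-1}\le 8$ subsets $I\subsetneq\{0,\dots,d\}$ with $\{0,d\}\subseteq I$ and each sign pattern of $(a_k)_{k\in I}$, that $N_+(Q)$ is dominated by the number of positive essential tropical roots of $tr_P^\dagger$. Since $Q$ has support $I$, this becomes a finite combinatorial comparison between Descartes-style sign-change counts for $Q$ and the essential count governed by the parities of the gaps $k_{i+1}-k_i$. The main obstacle is the sparse cases for $d=4$, notably $I=\{0,2,4\}$ and $I=\{0,4\}$, where the essential count can be as low as $0$: here the factor $e^{-1}$ arising from gaps of size $2$ is just barely small enough, and a direct AM-GM estimate is needed. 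For example, for $I=\{0,2,4\}$ with $a_0,a_2,a_4$ all of the same sign, the bound $|a_1x+a_3x^3|\le e^{-1}(a_0+|a_2|x^2+a_4x^4)<Q(x)$ shows that $P$ has no positive real roots, matching the zero positive essential tropical roots. A short enumeration handles the few remaining sign and support patterns for $d=4$ (and the easier $d\le 3$ cases), completing the proof.
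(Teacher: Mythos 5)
Your key quantitative observation is correct and genuinely useful: if $k$ is not a tropical index of $tr_P^\dagger$ and $k'<k<k''$ are the adjacent tropical indices, then indeed $|a_k|x^k< e^{-\alpha(1-\alpha)(k''-k')^2}\bigl(|a_{k'}|x^{k'}\bigr)^{\alpha}\bigl(|a_{k''}|x^{k''}\bigr)^{1-\alpha}$ for $x>0$, and the number of positive essential tropical roots does coincide with the number of sign changes of the subsequence $(a_k)_{k\in I}$, so reducing to the skeleton $Q$ is the right target. The configurations you treat explicitly (all $a_k$, $k\in I$, of one sign, where one only needs $P>0$ on $\bR_+$) are handled correctly by your AM--GM estimate.

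The gap is the step ``interval-by-interval this majorization forces $N_+(P)\le N_+(Q)$.'' A pointwise bound $|P(x)-Q(x)|\le C\sum_{k\in I}|a_k|x^k$ with $C<1$ does not control root counts once $Q$ has sign changes: near a positive root of $Q$ the right-hand side remains of the order of the largest monomial while $|Q(x)|$ vanishes, so the perturbation is not dominated by $|Q|$ exactly where extra roots of $P$ could appear. This is precisely the difficulty the paper's perturbation machinery is designed for (Lemma~\ref{lem:perturbed binomial}, proved by a Rouch\'e-type argument on a curvilinear rectangle, and Proposition~\ref{prop:roots of Q} via the generalized Rolle operators $L_k$), and that machinery requires the middle terms to be bounded by roughly $\tfrac{1}{4d}$ of the dominating binomial, together with separation hypotheses on the tropical roots --- far stronger than the factor $e^{-1}$ you obtain for gaps of size $2$ (note $e^{-1}>\tfrac14$). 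Consequently the hard cases, e.g.\ $I=\{0,1,4\}$ or $I=\{0,1,2,4\}$ with a sign change in $(a_k)_{k\in I}$, where Descartes applied to $P$ itself allows up to four positive roots and you must get down to two, are left without an actual argument; ``a short enumeration'' of sign patterns does not substitute for it, since the enumeration presupposes the unproved reduction $N_+(P)\le N_+(Q)$. The paper avoids this issue altogether: it derives explicit sufficient inequalities on $\lambda$ for membership in $\Lambda_d^+$ (Lemmas~\ref{lm:ExplicitSets} and~\ref{lem:degree4}) by passing to the boundary stratum of quartics with a double root, parametrizing it, and taking suprema, and then checks that $\lambda_k=e^{-k^2}$ satisfies those inequalities. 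To complete your route you would need to replace the majorization step by a genuine perturbation-of-root-count argument (Rouch\'e or Rolle style) whose constants actually tolerate $e^{-1}$, or else fall back on a case analysis of the paper's type.
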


Besides the fact that Conjecture~\ref{conj:main} looks quite appealing,  it might also shed  light on possible extensions of the classical Newton  inequalities for polynomials with a non-maximal number of real roots and positive coefficients. Additionally,  (if settled) it  also gives 
interesting consequences in the classical Karlin problem on zero-diminishining sequences, see \cite{Ka} and \S~\ref{sec:appl}.

\medskip\noindent
{\em Acknowledgements.} The  third author wants to thank Professor V.~l.~Kostov of  Universit\'e de Nice for discussions.  

\section{Introductory results and Theorem~\ref{th:main}}

We will begin with the following statement. Given a sequence $\la=\{\la_k\}_{k=0}^\infty$, define its {\it symbol}  as the formal series $S_\la(x):=\sum_{k=0}^\infty \la_kx^k$.  Define its $d$-th truncation as 
$S_\la^{\{d\}}(x):=\sum_{k=0}^d \la_k x^k.$

\begin{lm}
\label{lem:LogConcaveTropical}
A positive sequence $\lambda$ is log-concave if and only if, for each $d,$ the 
$d$-th truncation $S_\la^{\{d\}}(x)$  
is a tropically real-rooted polynomial.
\end{lm}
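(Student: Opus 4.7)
The plan is to reduce the tropical real-rootedness condition on $S_\lambda^{\{d\}}$ to a pointwise concavity condition on the sequence $\{\ln \lambda_k\}_{k=0}^d$, and then observe that this concavity holding for all $d$ is exactly the log-concavity of $\lambda$.

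First, I would unpack the definition. Since all coefficients $\lambda_k$ are positive, an integer $k \in \{0,\dots,d\}$ is a tropical index of $S_\lambda^{\{d\}}$ iff there exists $x_k \geq 0$ with $\lambda_k x_k^k \geq \lambda_i x_k^i$ for every $i \neq k$. Writing $\xi = \ln x_k$ and $c_i = \ln \lambda_i$, this becomes: there exists $\xi \in \bR$ (or $\xi = -\infty$ when $k=0$) such that $c_k + k\xi \geq c_i + i\xi$ for all $i \neq k$. Equivalently, using the description of $\NA_P$ recalled in the earlier remark, $k$ is a tropical index precisely when the point $(k, c_k)$ lies on the upper boundary of the convex hull of $\{(i, c_i) : 0 \le i \le d\}$, i.e., it is not strictly below the least concave majorant of the sequence $\{c_i\}_{i=0}^d$.

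For the forward implication, assume $\lambda$ is log-concave, i.e., the sequence $c_k = \ln \lambda_k$ is concave. Fix $d$ and any $k \in \{0,\dots,d\}$. For $1 \le k \le d-1$, concavity gives $c_k - c_{k-1} \geq c_{k+1} - c_k$, and more generally the left-slopes are nonincreasing, so any $\xi \in [c_{k+1}-c_k,\; c_k-c_{k-1}]$ satisfies $c_k + k\xi \geq c_i + i\xi$ for every $i \ne k$ (the inequality for $i = k \pm 1$ is immediate from the choice of $\xi$, and the inequality for $i$ further from $k$ then follows by telescoping the slope bounds). For $k=0$ (respectively $k=d$) it suffices to take $\xi$ sufficiently negative (respectively positive). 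Hence every $k$ is a tropical index and $S_\lambda^{\{d\}}$ is tropically real-rooted.

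For the converse, I would argue by contrapositive: if $\lambda$ fails log-concavity at some index, say $\lambda_k^2 < \lambda_{k-1}\lambda_{k+1}$ for some $k \ge 1$, then $2c_k < c_{k-1} + c_{k+1}$, so the point $(k, c_k)$ lies strictly below the chord joining $(k-1, c_{k-1})$ and $(k+1, c_{k+1})$. Pick any $d \ge k+1$. For every $\xi \in \bR$, the arithmetic-mean inequality on the two linear functions at $i = k-1$ and $i = k+1$ gives
\[
(c_{k-1}+(k-1)\xi) + (c_{k+1}+(k+1)\xi) = c_{k-1}+c_{k+1} + 2k\xi > 2(c_k + k\xi),
\]
so at least one of $c_{k-1}+(k-1)\xi$, $c_{k+1}+(k+1)\xi$ strictly exceeds $c_k + k\xi$. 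Therefore no $\xi$ witnesses $k$ as a tropical index, and $S_\lambda^{\{d\}}$ is not tropically real-rooted.

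The argument is essentially a dictionary translation, so there is no serious obstacle; the only care needed is in the boundary cases $k=0$ and $k=d$ (handled by letting $\xi$ tend to $\mp\infty$) and in verifying that the slope inequality at neighbors propagates to all indices $i$, which is precisely the content of concavity.
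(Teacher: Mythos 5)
Your argument is the paper's proof transported to logarithmic coordinates, so the approach is essentially identical: the paper exhibits the explicit multiplicative witness $x_m=\sqrt{\lambda_{m-1}/\lambda_{m+1}}$ and propagates the two neighbouring inequalities outward using the monotonicity of the binomial ratios, which is exactly your telescoping of slopes; your converse (for every $\xi$ the chord through the points at $k\pm1$ lies strictly above $(k,c_k)$) is the paper's inequality $\lambda_m x^m<\sqrt{\lambda_{m-1}x^{m-1}\,\lambda_{m+1}x^{m+1}}\le\max\left(\lambda_{m-1}x^{m-1},\lambda_{m+1}x^{m+1}\right)$ verbatim. One correction in the forward direction: with $\xi=\ln x_k$ the requirement $c_k+k\xi\ge c_{k\pm1}+(k\pm1)\xi$ reads $c_{k-1}-c_k\le\xi\le c_k-c_{k+1}$, so the admissible interval is $[\,c_{k-1}-c_k,\;c_k-c_{k+1}\,]$ (nonempty precisely because of concavity, and whose midpoint is the logarithm of the paper's witness $x_k$), not $[\,c_{k+1}-c_k,\;c_k-c_{k-1}\,]$ as you wrote; the latter is the reflection of the correct interval, and a generic point of it fails to dominate (e.g.\ $c_{k-1}=0$, $c_k=c_{k+1}=10$, $\xi=10$). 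With that sign fixed, the telescoping step and the boundary cases $k=0,d$ go through exactly as you describe.
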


\begin{proof}[Proof\/ of\/ Lemma \ref{lem:LogConcaveTropical}]
Assume first that $\lambda$ is log-concave. For each $m \geq 1,$ set 
$x_m := \sqrt{\lambda_{m-1}/\lambda_{m+1}}$.
Then,
\[
\frac{x_{m+1}}{x_m} 
= \frac{\lambda_{m}}{\sqrt{\lambda_{m-1}\lambda_{m+1}}}\frac{\lambda_{m+1}}{\sqrt{\lambda_m\lambda_{m+2}}} \geq 1,
\]
so that $\{x_m\}_{m=1}^\infty$ is a non-decreasing sequence of positive real
numbers. Further more,
\[
\frac{\lambda_m x_m^m}{\lambda_{m-1} x_m^{m-1}} = 
\frac{\lambda_m x_m^m}{\lambda_{m+1} x_m^{m+1}} = 
\frac{\lambda_m}{\sqrt{\lambda_{m-1}\lambda_{m+1}}} \geq 1.
\]
Since both binomials $\lambda_k x^k - \lambda_{k+1}x^{k+1}$ and 
$\lambda_k x^k - \lambda_{k-1}x^{k-1}$ have exactly 
one positive real root, we conclude that
$\lambda_k x_m^k\geq \lambda_{k+1}x_m^{k+1}$ if $k\geq m$ and that
$\lambda_k x_m^k\geq \lambda_{k-1}x_m^{k-1}$ if $k\leq m$.
Hence,
\[
\lambda_m x_m^m \geq \max_{k\neq m}\, \lambda_k x_m^k.
\]

For the converse, assume that $\lambda$ is not log-concave. That is, there exists
an index $m$ for which $\lambda_m^2 < \lambda_{m-1}\lambda_{m+1}$.
Then, for $x\geq 0$,
\[
\lambda_m x^m < \sqrt{\lambda_{m-1}x^{m-1}\, \lambda_{m+1}x^{m+1}}
\leq \max \left(\lambda_{m-1}x^{m-1}, \lambda_{m+1}x^{m+1}\right).
\]
In particular, $m$ is not a tropical index of $S_\lambda(x)$.
\end{proof}

\begin{proof}[Proof\/ of\/ Theorem \ref{thm:TropicalIndexPreserver}]
Let us first prove that a sequence $\lambda$ is log-concave
if and only if it is a tropical index preserver.
Assume first that $\lambda$ is log-concave.
Let $m$ be a tropical index of $P$, and let $x_m\geq 0$ be such that
\[
a_m x_m^m \geq \max_{k\neq m}\, a_k x_m^k.
\]
By Lemma \ref{lem:LogConcaveTropical} we can find  $\zeta_m$ such that
\[
\lambda_m\zeta_m^m \geq \max_{k\neq m}\, \lambda_k\zeta_m^k.
\]
Then
\[
\lambda_ma_m(z_m \zeta_m)^m =\lambda_mx_m^m\,a_m \zeta_m^m \geq 
\lambda_kx_m^k\,a_k \zeta_m^k
\]
for all $k$. Hence, $m$ is a tropical index of $T_\lambda[P]$.
For the converse, it suffices to consider the sequence of polynomials $1+x+\dots + x^d$,
which are tropically real-rooted for all $d$, and 
use Lemma \ref{lem:LogConcaveTropical}.

Let us now prove that $\lambda$ is log-concave if and only if
it is a central index preserver.
Assume first that $\lambda$ is log-concave, and
let $\zeta_m$ be as in the proof of Theorem \ref{thm:TropicalIndexPreserver}.
Let $m$ be a central index of $P$, and let $x_m$ be such that
\[
a_mx_m^m \geq \sum_{k\neq m} a_k x_m^k.
\]
Then,
\[
\lambda_m a_m (x_m\zeta_m)^m \geq \sum_{k\neq m}\lambda_m \zeta_m^m a_k x_m^k
\geq \sum_{k\neq m}\lambda_k \zeta_m^k a_k x_m^k,
\]
implying that $m$ is a central index of $T_\lambda[P]$.
For the converse, assume that $\lambda_m^2 < \lambda_{m-1}\lambda_{m+1}$,
and consider the action of $T_\lambda$ on the trinomial $x^{m-1} + 2 x^m + x^{m+1}$.
\end{proof}

Using Lemma \ref{lem:LogConcaveTropical}, we can rephrase
Theorem \ref{thm:TropicalIndexPreserver} 
in a manner similar to the classical result of 
P{\'o}lya and Schur, see \cite{PS14}. Given a  sequence $\lambda$ of real numbers, we say that its  symbol $S_\la(x)$ is {\it tropically real-rooted} if for each $d=0,1,\dots$, the $d$-th truncation 
$S_\la^{\{d\}}(x)$ 
is tropically real-rooted.

\begin{cor}
A positive sequence $\lambda$
is a central index and tropical index preserver if and
only if its \emph{symbol} $S_\lambda(x)$ is
tropically real-rooted.\qed   
\end{cor}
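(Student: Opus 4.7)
The plan is to derive this corollary directly by chaining together the two equivalences that have already been established in the preceding text. The statement asserts an equivalence between two conditions on a positive sequence $\lambda$: that it be simultaneously a central and tropical index preserver, and that its symbol $S_\lambda(x)$ be tropically real-rooted in the sense just defined (namely, that every truncation $S_\lambda^{\{d\}}(x)$ is tropically real-rooted).

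First I would invoke Theorem~\ref{thm:TropicalIndexPreserver} to replace the condition ``$\lambda$ is a central index and tropical index preserver'' by the condition ``$\lambda$ is log-concave''. Indeed, the theorem shows that each of the two preserver properties, individually, is equivalent to log-concavity of $\lambda$, and therefore so is their conjunction. This reduces the corollary to the assertion that $\lambda$ is log-concave if and only if $S_\lambda(x)$ is tropically real-rooted.

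Next I would apply Lemma~\ref{lem:LogConcaveTropical} together with the definition of a tropically real-rooted symbol introduced immediately before the corollary statement. The lemma says precisely that $\lambda$ is log-concave if and only if every truncation $S_\lambda^{\{d\}}(x)$ is tropically real-rooted, and by definition the latter is exactly the condition that $S_\lambda(x)$ itself be tropically real-rooted. Combining this with the reduction from the previous paragraph completes the chain of equivalences.

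There is no genuine obstacle here: the corollary is a formal consequence of Theorem~\ref{thm:TropicalIndexPreserver} and Lemma~\ref{lem:LogConcaveTropical}, and the role of the statement is expository, recasting the theorem in a form parallel to the classical P\'olya--Schur characterization of multiplier sequences via the real-rootedness of the generating function. The only thing to be mildly careful about is matching quantifiers: both the preserver properties in Theorem~\ref{thm:TropicalIndexPreserver} and the tropical real-rootedness of the symbol are defined for all degrees $d$ at once, so the equivalences really do line up uniformly in $d$.
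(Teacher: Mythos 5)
Your proposal is correct and matches the paper's intended argument exactly: the corollary is stated with a \qed because it follows immediately by chaining Theorem~\ref{thm:TropicalIndexPreserver} (both preserver properties are equivalent to log-concavity) with Lemma~\ref{lem:LogConcaveTropical} (log-concavity is equivalent to tropical real-rootedness of every truncation), which is the definition of the symbol being tropically real-rooted. Your remark about the quantifier over all degrees $d$ is the only point requiring any care, and you handle it correctly.
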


\begin{proof}[Proof of Proposition \ref{pro:SignIndependently}]
To prove the \emph{only if}-part, consider the polynomial
\[
Q(x) = |a_m|x^m - \sum_{k\neq m} |a_k| x^k,
\]
for some $1\le m\le d-1$.
Notice that $Q$ is is obtained from $P$ by flipping signs of the coefficients
and hence, by assumption, $Q$ is real-rooted. In particular, $Q$ has exactly two positive
roots (counted with multiplicity). Let $x_m$ be the mean value of the positive roots
of $Q$. Then,
\[
 |a_m|x_m^m - \sum_{k\neq m} |a_k| x_m^k \ge 0,
\]
with equality if and only if $Q$ has a positive root of multiplicity two.
In particular, $m$ is a central index of $P$.

For the \emph{if}-part, choose arbitrary signs of the coefficients of $P$.
We note that condition \eqref{eqn:Lopsidedness} 
implies that
\[
\sgn(P(x_m)) = \sgn(a_mx_m^m) = \sgn(a_m),
\]
for $x>0$. Using additionally Descartes' rule of signs, we conclude that
the number of positive roots of $P$ is equal to the number of sign
changes in the sequence $\{a_k\}_{0\le k\le d}$. Similarly, the number 
of negative roots of $P$ is equal to the number of sign changes in
the sequence $\{(-1)^ka_k\}_{0\le k \le d}$. 
As $a_k\neq 0$ for each $k$, these two numbers
sum up to $d$, implying that $P(x)$ is real-rooted.
Since the signs of the coefficients were chosen arbitrary,
we are done.
\end{proof}

\begin{proof}[Proof\/ of\/ Corollary \ref{cor:SignIndependently}]
It follows from Proposition \ref{pro:SignIndependently} that a positive sequence $\la$ preserves the set of sign-independently
real-rooted polynomials if and only if it preserves central indices. Additionally, 
it follows from Theorem \ref{thm:TropicalIndexPreserver} that a positive sequence $\la$ preserves central indices if and only if it is log-concave.
\end{proof}

\begin{proof}[Proof of Theorem~\ref{th:main}]
As we are only concerned with the number of (real) roots of the polynomial
$P$, we can consider $P$ up to a non-vanishing scalar, i.e., we identify $P$ with its coefficient vector
$(a_0:\ldots:a_d) \in \RP^d$.

Let us first show that the set $\Lambda_d$ is nonempty.
Let $\lambda = \{\lambda_k\}_{0\le k\le d}$ be a finite positive strictly log-concave sequence.
By Lemma~\ref{lem:LogConcaveTropical} we have that $S_\lambda^{\{d\}}(x)$ is
tropically real-rooted. Moreover  it follows from the proof of Lemma~\ref{lem:LogConcaveTropical}  and the strict log-concavity that all the tropical roots of $S_\lambda^{\{d\}}(x)$ are  of multiplicity one.

Firstly, for each $P\in \RP^d$, we claim that there exists a positive number
$s = s(P)$ such that $tr_P^{\lambda^s}(\xi)$ has at least as
many distinct negative tropical roots as the number of negative roots of $P$.
Here, 
$\lambda^s$ denotes the sequence $\{\lambda_k^s\}_{0\le k\le d}$.
To prove this, notice first that, by using the change of variables
$\xi \mapsto s\xi$,   the number of negative tropical roots of
\[
 tr_P^{\lambda^s}(\xi) = \max_{0\le k\le d} (k\xi + \ln|a_k| + s \ln\lambda_k)
\]
is equal to the number of negative tropical roots of the tropical polynomial
\[
 \max_{0\le k\le d} \left(s\left(k\xi + \frac{\ln|a_k|}{s} + \ln \lambda_k\right)\right),
\]
the latter being equal to the Descartes' bound on the maximal number of
negative roots of $P,$ for $s$ sufficiently big. Indeed, 
for all $a_k \neq 0$ the term $(\ln|a_k|)/s$ tends to $0$ as $s \rightarrow \infty$.

Secondly, we claim that $s = s(P)$ can be chosen in such a way that
there exists a neighborhood $N(P)\subset \RP^d$ of $P$
such that for each $Q\in N(P)$ the number of
negative essential tropical roots of $tr_Q^{\lambda^s}$
is not less than the number of negative roots of $Q$.
Consider first the case  $a_0 \neq 0$.
Then, there is a neighborhood $N_1(P)$ of $P$
such that the number of negative roots of $Q\in N_1(P)$ is
at most equal to the number of negative roots of $P$. Since all negative tropical roots of $tr_P^{\lambda^s}$ are  distinct,
there is a neighborhood $N_2(P)$ such that the number of negative
tropical roots of $tr_P^{\lambda^s}$ is equal to the number of
negative tropical roots of $tr_Q^{\lambda^s}$ for all $Q\in N_2(P)$.
(If $P$ has some vanishing coefficients, then $N_2(P)$
can be chosen so that the corresponding indices are not tropical indices
of $Q$ for any $Q\in N_2(P)$.)
In this case we can take $N(P) = N_1(P)\cap N_2(P)$. 
Complementarily, consider the case $a_0 = 0$.
For each polynomial $Q$, let $Q'$ denote the polynomial obtained by removing the
constant term of $Q$. Using an inductive argument, we can choose a neighborhood $N(P)$ of
$P$ such that, for each $Q\in N(P)$, the number of
 negative tropical roots of $tr_{Q'}^{\lambda^s}$
is not less than the number of negative roots of $Q'$.
Notice that for the first non-zero coefficient $a_k$ of $P$,  
$k$ is a tropical index of $P$. If $(-1)^ka_k$ is positive, then the number
of negative real roots of $P$ increases by one if $a_0$ is 
perturbed by a small negative number, and similarly the number of 
negative tropical roots is increased by one,
and vice versa.

Finally, to see that $\Lambda_d$ is nonempty, we note that $\RP^d$ is
compact. Therefore, the open covering $\cup_{P\in \RP^d} N(P)$ 
of $\RP^d$ has
a finite subcovering $\RP^d\subset N(P_1)\cup\dots \cup N(P_M)$.
Let $s^* = \max_{1\le i \le M} s(P_i)$.
Since $\lambda^{s^* - s(P_i)}$ is log-concave, it is a tropical index preserver
by Theorem \ref{thm:TropicalIndexPreserver}.
Hence, we conclude that $\lambda^{s^*} \in \Lambda_d$.

Let us now prove that the recession cone $C$ of $\Ln(\Lambda_d)$ is equal to the set 
log-concave sequences of length $d+1$. 
The fact that the latter set is contained in $C$ follows immediately 
from Theorem \ref{thm:TropicalIndexPreserver}, as each log-concave sequence is a 
tropical index preserver. Conversely, if $\lambda$ is not log-concave,
then the $d$-th truncation $S_\lambda^{\{d\}}$ of its symbol is not tropically real-rooted.
Let $P$ be a tropically real-rooted polynomial, and let $\lambda^*$ be
a log-concave sequence. By a similar argument as above, we can conclude
by letting $s$ tend to infinity, that the tropical polynomial
\[
tr_P^{\lambda^*\lambda^s}(\xi) = \max_{0\le k\le d}(k\xi \ln |a_k| + \ln \lambda_k^* + s \ln \lambda_k)
\]
is not tropically real-rooted. Hence, $\lambda$ is not
contained in the recession cone of the set $\Ln(\Lambda_d)$.

The remaining statements of Theorem~\ref{th:main} follow easily from the above  facts.
\end{proof}

\section{Theorems~\ref{thm:example} and ~\ref{thm:counterexample}}

To settle Theorem~\ref{thm:example}, recall the following statement  proved in e.g., \cite{NoSh}. 
\begin{lm}\label{lem:slopes} For a given real polynomial $P$ and real $x\neq 0$, 
	assume that all tropical roots of $tr_P$ are more than $\log 3$ away from $-\log |x|$. Let $k$ be the tropical index corresponding to $x$. Then $k$ is a central index. In particular, $P(x)\not=0$.
\end{lm}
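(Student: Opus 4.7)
The plan is to convert the spacing hypothesis into a strict exponential decay estimate of the form
\[
|a_i|\,|x|^i<|a_k|\,|x|^k\cdot 3^{-|i-k|},\qquad 0\le i\le d,\ i\neq k,
\]
and then close the argument by summing a geometric series.

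Let $\eta$ be the log-coordinate of $x$ in the convention used by the tropical polynomial, so that $k$ is the index at which the maximum in \eqref{eqn:tropicalization} is attained at $\xi=\eta$. Let $\widetilde A_P$ denote the least concave majorant of the piecewise linear function through the points $\{(i,\log|a_i|)\}_{i=0}^d$. Then $\log|a_i|\le\widetilde A_P(i)$ with equality exactly at the tropical indices $k_0<k_1<\dots<k_m$, while $\widetilde A_P$ is linear on each interval $[k_{\ell-1},k_\ell]$ with slope $-\xi_\ell$, where $\xi_1<\dots<\xi_m$ are the tropical roots of $tr_P$. The hypothesis translates into $k=k_j$ for some $j$ with $\xi_j<\eta<\xi_{j+1}$ (using the convention $\xi_0=-\infty$, $\xi_{m+1}=+\infty$), together with the stronger $\eta-\xi_j>\log 3$ and $\xi_{j+1}-\eta>\log 3$.

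I would next work with the concave piecewise linear function $\tilde f(i):=i\eta+\widetilde A_P(i)$, whose slope on $[k_{\ell-1},k_\ell]$ equals $\eta-\xi_\ell$. By concavity combined with the separation above, these slopes are strictly less than $-\log 3$ on every piece to the right of $k$ and strictly greater than $+\log 3$ on every piece to the left of $k$. Telescoping then gives $\tilde f(i)<\tilde f(k)-|i-k|\log 3$ for each $i\neq k$. Combined with $\log|a_i|\le\widetilde A_P(i)$ and exponentiated, this yields the decay estimate above.

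Finally, summing the estimate over $i\neq k$ produces
\[
\sum_{i\neq k}|a_i|\,|x|^i<|a_k|\,|x|^k\cdot 2\sum_{j\ge 1}3^{-j}=|a_k|\,|x|^k,
\]
which is exactly condition \eqref{eqn:Lopsidedness} with $x_k=|x|$, so $k$ is a central index; the same strict inequality, combined with the triangle inequality $|P(x)|\ge|a_kx^k|-\sum_{i\ne k}|a_ix^i|$, gives $P(x)\ne 0$. The only real care-point is ensuring that the decay estimate applies uniformly to \emph{all} indices, not just tropical ones; this is absorbed automatically by working with $\widetilde A_P$ in place of $\log|a_i|$, and the remainder is a short calculation with the piecewise linear geometry of $tr_P$.
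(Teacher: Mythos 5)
Your proposal is correct and follows essentially the same route as the paper: establish the geometric decay $|a_i x^i|<3^{-|i-k|}|a_k x^k|$ from the separation of the tropical roots, then sum the geometric series $2\sum_{j\ge1}3^{-j}=1$ to obtain the lopsidedness condition \eqref{eqn:Lopsidedness}. The paper simply asserts the decay estimate (with a sign typo in the exponent), whereas you supply its derivation via the concave majorant $\widetilde A_P$; otherwise the arguments coincide.
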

\begin{proof}
	If $k$ is the tropical index corresponding to $-\log |x|$ then $|a_jx^j|<3^{|k-j|}|a_kx^k|$. Summing  over all $j\not=k$, we get $|a_kx^k|>\sum_{j\not=k}|a_jx^j|$ and the claim follows.
\end{proof}

\begin{cor}\label{cor:sign-ind 2log3}
	Let $P$ be a polynomial of degree $d$ and assume that every integer $k=0,...,d$ is a tropical index of $tr_P$.
	Assume that the tropical roots of $tr_P$ are all simple and more than $2\log 3$ separated one from another. Then $P$ is sign-independently real rooted.
\end{cor}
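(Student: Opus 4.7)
The plan is to apply Proposition~\ref{pro:SignIndependently}, which reduces sign-independent real-rootedness of $P$ to showing that every integer $k=0,\dots,d$ is a central index of $P$. The hypothesis already ensures that every such $k$ is a \emph{tropical} index, and Lemma~\ref{lem:slopes} supplies the bridge: whenever a point $x\neq 0$ lies in a region whose tropical index is $k$ and whose corresponding $\xi$-coordinate is more than $\log 3$ from every tropical root, then $k$ is automatically a central index. So the whole task reduces to choosing one such $x$ for each $k$.

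First I would set up the standard picture: $tr_P(\xi)$ is the maximum of the $d+1$ affine functions $\xi\mapsto k\xi+\ln|a_k|$, and since by assumption every $k$ is a tropical index, each one of these affine pieces actually appears as the maximum on some nonempty interval $I_k\subset\bR$ (with $I_0$ unbounded to one side and $I_d$ unbounded to the other). The endpoints of the bounded intervals $I_k$, for $1\le k\le d-1$, are consecutive tropical roots of $tr_P$, and by the simplicity and $2\log 3$-separation hypothesis these endpoints are at distance strictly greater than $2\log 3$ from each other. Consequently the midpoint $\xi_k^*$ of $I_k$ is more than $\log 3$ away from every tropical root of $tr_P$. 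For $k=0$ and $k=d$ the interval $I_k$ is a half-line, so we can instead take $\xi_k^*$ to be any point in $I_k$ whose distance to the unique finite endpoint exceeds $\log 3$ (e.g.\ very negative for $k=0$ and very positive for $k=d$).

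Next, for each $k$ I pick $x_k\neq 0$ with $-\log|x_k|=\xi_k^*$ (the sign convention that matches Lemma~\ref{lem:slopes}). By construction the tropical index corresponding to $x_k$ is exactly $k$ and all tropical roots of $tr_P$ are more than $\log 3$ away from $-\log|x_k|$. Lemma~\ref{lem:slopes} then yields
\[
|a_k|\,|x_k|^k \;>\;\sum_{j\neq k}|a_j|\,|x_k|^j,
\]
so $k$ is a central index of $P$. Running this argument for every $k=0,\dots,d$ and invoking Proposition~\ref{pro:SignIndependently} finishes the proof.

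There is not really a hard step here: the whole argument is a bookkeeping of how much slack $2\log 3$ separation gives, together with a direct appeal to Lemma~\ref{lem:slopes} and Proposition~\ref{pro:SignIndependently}. The only mildly delicate point is the treatment of the two extreme indices $k=0$ and $k=d$, where $I_k$ is a half-line and one must simply note that we are free to push $x_k$ far enough toward $0$ or $\infty$ to escape any finite cluster of tropical roots; but this is immediate from the structure of $tr_P$ as a piecewise linear convex function.
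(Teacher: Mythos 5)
Your proof is correct and follows essentially the same route as the paper: for each $k$ you choose the midpoint of the interval on which the $k$-th affine piece of $tr_P$ dominates (the paper writes this point explicitly as $x=\sqrt{a_{k-1}/a_{k+1}}$), observe that the $2\log 3$ separation puts it more than $\log 3$ from every tropical root, invoke Lemma~\ref{lem:slopes} to conclude $k$ is a central index, and finish with Proposition~\ref{pro:SignIndependently}. The only difference is that you spell out the bookkeeping (including the half-line cases $k=0,d$) that the paper leaves implicit.
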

\begin{proof}
	Indeed, for  $x=\sqrt{a_{k-1}/a_{k+1}}$ the conditions of Lemma~\ref{lem:slopes} are satisfied, so $k$ is a central index and the claim follows from 
    Proposition~\ref{pro:SignIndependently}.\end{proof}

\medskip
Our proof of Theorem~\ref{thm:example} requires two steps. At first, we prove in Lemma~\ref{lem:perturbed binomial positive}  that if a polynomial $P=\dots+a_mx^m+\dots+a_nx^n+\dots$ is  a small perturbation of a polynomial $a_mx^m+\dots+a_nx^n$ with positive coefficients then it has no roots on some positive interval,  with explicit bounds on the dependence of the size of the perturbation on the size of the interval.

Then we  group the tropical roots of $tr_P(\xi)$  
into several clusters of closely located roots and prove that in some neighborhood of each cluster the number of logarithms of positive roots of $P$ is less than or equal to the number of positive tropical roots of  $tr_P$ in this cluster, using a generalization of Rolle's theorem presented in  Lemma~\ref{lem:Rolle}. A similar fact holds  for negative roots as well. 

\begin{lm}\label{lem:perturbed binomial}
	Let  $P$ be a real polynomial and let $U=[\alpha',\alpha'']$ be a real interval such that
	\begin{enumerate}
		\item $tr_P$ has a unique tropical root $\alpha\in U$ corresponding to two monomials $a_mx^m$ and $a_nx^n$, $m<n$, i.e., $\alpha=\frac{\log |a_n|-\log |a_m|}{n-m}$,
		\item $\alpha', \alpha''$ are located more than $\log 4$ away from all tropical roots of $tr_P$,
		\item  for all $l$, $m<l<n$,  
	\begin{equation}\label{eq:perturbed binomial 4}
	\log |a_l|\le v(l)-\log d-\log 4,
	\end{equation}
		where $v(u)=\alpha u+\beta$ is the unique linear function whose graph passes through  $(m,\log |a_m|)$ and $(n,\log|a_n|)$. 
	\end{enumerate}
	Then $P$ has the same number of real roots on the interval $[e^{\alpha'}, e^{\alpha''}]$ as $a_mx^m+a_nx^n$, and the same holds on the interval $[-e^{\alpha''}, -e^{\alpha'}]$.

\end{lm}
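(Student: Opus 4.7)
The plan is to view $P$ on $[e^{\alpha'},e^{\alpha''}]$ as a small perturbation of the two-term polynomial $B(x) := a_m x^m + a_n x^n$. On the positive half-line $B$ has no real roots when $\sgn(a_m)=\sgn(a_n)$ and a single simple root at $x=e^{\alpha}$ otherwise, so it suffices to show that the remainder $R(x) := P(x)-B(x)$ is small enough in modulus to preserve the real-root count on the interval.

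The core estimate controls each omitted monomial $|a_l x^l|$, for $l\notin\{m,n\}$, against $e^{tr_P(\xi)}=\max(|a_m x^m|,|a_n x^n|)$, where $\xi=\log x$. For middle indices $l\in(m,n)$, the linear function $v$ in (3) coincides with the concave-majorant segment $\widetilde A_P$ joining $(m,\log|a_m|)$ and $(n,\log|a_n|)$, so $l\xi+v(l)\le tr_P(\xi)$ by Legendre duality; then (3) upgrades this to $|a_l x^l|\le e^{tr_P(\xi)}/(4d)$, contributing at most $e^{tr_P(\xi)}/4$ over the at most $d-1$ middle indices. For outer indices $l\notin[m,n]$, the dual line $L_l(\xi):= l\xi+\widetilde A_P(l)$ is tangent to $tr_P$ at some tropical root $\zeta\ne\alpha$, which by (2) lies at distance exceeding $\log 4$ from every $\xi\in[\alpha',\alpha'']$; since each tropical root of $tr_P$ carries a slope jump of at least one, the convex gap $tr_P-L_l$ grows by at least $\log 4$ for every unit of $\operatorname{dist}(l,\{m,n\})$, giving $|a_l x^l|\le 4^{-\operatorname{dist}(l,\{m,n\})}e^{tr_P(\xi)}$ whose sum over outer indices is bounded by a small fixed fraction of $e^{tr_P(\xi)}$. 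Combining, $|R(x)| < \tfrac12\,e^{tr_P(\xi)}$ uniformly on $[e^{\alpha'},e^{\alpha''}]$.

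When $\sgn(a_m)=\sgn(a_n)$, one has $|B(x)|\ge e^{tr_P(\xi)}>|R(x)|$, so $P$ has no roots on the interval, matching $B$. When $\sgn(a_m)\ne\sgn(a_n)$, the $\log 4$ buffer forces $|B|\ge (3/4)\,e^{tr_P(\xi)}$ at the endpoints, so $P(e^{\alpha'})$ and $P(e^{\alpha''})$ inherit the opposite signs $\sgn(a_m)$ and $\sgn(a_n)$ of $B$ there, yielding at least one root by the intermediate value theorem. Uniqueness is obtained by applying the same two steps to $xP'(x)=\sum_k k a_k x^k$, whose tropical data differ from those of $P$ only by the shift $\log k$: the leading binomial of $xP'(x)$ still has opposite signs at indices $m,n$ and a nearby tropical root, and the analogous remainder bound lets a Rolle argument rule out a second root of $P$.

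The negative-$x$ statement reduces to the positive-$x$ one via $x\mapsto -x$, which preserves $\{|a_k|\}$ and hence the entire tropical picture. The principal technical obstacle is the geometric-decay step in the outer-index estimate: converting the single-step $\log 4$ gap from (2) into exponential decay in $\operatorname{dist}(l,\{m,n\})$ requires tracking how slope increments of $tr_P$ accumulate at successive tropical roots beyond the tangency point of each $L_l$, so that the sum of the at most $d$ outer contributions still leaves a total bounded by a definite fraction of $e^{tr_P(\xi)}$.
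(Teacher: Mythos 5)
Your decomposition of $P$ on $[e^{\alpha'},e^{\alpha''}]$ into the binomial $B=a_mx^m+a_nx^n$ plus a remainder, with the three-group estimate (indices below $m$, strictly between $m$ and $n$, above $n$), matches the bounds the paper establishes, and the same-sign case and the endpoint-sign/IVT step are sound. One quantitative caveat: with the $\log 4$ separation the two outer groups contribute up to $\frac13|a_mx^m|+\frac13|a_nx^n|$ and the middle group up to $\frac14\left(|a_mx^m|+|a_nx^n|\right)$, so near $\xi=\alpha$ the remainder is only bounded by $\frac{7}{12}\left(|a_mx^m|+|a_nx^n|\right)\le\frac76 e^{tr_P(\xi)}$, not by $\frac12 e^{tr_P(\xi)}$ as you assert; what actually saves the argument is that this still loses to $|a_mx^m|+|a_nx^n|$ when the signs agree, and to $|a_mx^m|-|a_nx^n|\ge\frac34|a_mx^m|$ at the endpoint $\xi=\alpha'$.

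The genuine gap is uniqueness in the opposite-sign case. Rolle applied via $xP'(x)$ gives $\#\{P=0\}\le\#\{xP'=0\}+1$ on the interval, but, as you note yourself, the leading binomial of $xP'$ \emph{still} has opposite signs at $m$ and $n$, so your "same two steps" only show that $xP'$ has \emph{at least} one root there; to show it has \emph{at most} one you need exactly the uniqueness statement you are trying to prove, now for $xP'$, and the recursion never terminates. Even granting one root of $xP'$, you would only get $\#\{P=0\}\le 2$ and would still have to invoke parity of the sign change to finish. Killing one of the two dominant monomials with an operator $L_m$ or $L_n$ does not obviously repair this, since the tropical picture of the resulting polynomial degenerates near the endpoint where the killed monomial used to dominate. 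The paper closes this step by a complex-analytic argument absent from your proposal: it takes the curvilinear rectangle $\Pi$ bounded by $\{\log|x|=\alpha'\}$, $\{\log|x|=\alpha''\}$ and $\{\arg x=\pm\pi/(n-m)\}$, checks that on each boundary arc either $a_mx^m$, or $a_nx^n$, or their sum (whose two terms have equal arguments on the rays $\arg x=\pm\pi/(n-m)$, so their moduli add) dominates all remaining terms, and concludes by the argument principle that $P$ has exactly one zero in $\Pi$, necessarily real by conjugation symmetry. You need either this Rouch\'e-type step or a genuinely different real-variable uniqueness argument; as written, the proof is incomplete.
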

\begin{proof}
	The sum $\sum_{k<m}|a_kx^k|$ is less than $\frac{1}{3}|a_mx^m|$ on $\{x\in \bC,\,  \log |x|>\alpha'\}$, compare to the proof of Lemma~\ref{lem:slopes}. Similarly,  $\sum_{k>n}|a_kx^k|\le \frac 1 3 |a_nx^n|$ on $\{x\in\bC,\, \log |x|<\alpha''\}$.
Also, $|\sum_{m<k<n}a_kx^k|\le\frac 1 4 \left( |a_mx^m|+|a_nx^n|\right)$ on $\{x\in \bC,\, \alpha'\le \log |x|\le \alpha''\}$.
	
Consider the case  $I=[e^{\alpha'}, e^{\alpha''}]$; the  case of  $I=[-e^{\alpha''},- e^{\alpha'}]$ is treated similarly. Assume first that $a_nx^n$ and $a_mx^m$ have the same signs on this interval. This means that they together dominate  the sum of all other terms,  and there are no zeros on $I$ at all.
	
If the signs are different, choose a curvilinear rectangle $\Pi$ containing $I$ and bounded by $\{\log|x|=\alpha'\}$, $\{\log|x|=\alpha''\}$ and $\{\arg x=\pm \pi/(n-m)\}$. The inequalities above imply that $a_mx^m$ dominates the sum of all other terms on $\{\log|x|=\alpha'\}$. Similarly, $a_nx^n$ dominates the sum of all other terms on $\{\log|x|=\alpha''\}$.
	
Moreover, the sum  $a_mx^m+a_nx^n$ dominates the sum of all other terms on $\{\log|x|\in U, |\arg x|=\pi/(n-m)\}$ as the arguments of $a_mx^m$ and $a_nx^n$  are equal there. In other words, the increment of the argument of $P$ on the boundary of $\Pi$ is the same as that of $a_mx^m+a_nx^n$. Therefore $P$ has a unique root in $\Pi$, which is necessarily real.
\end{proof}

\begin{cor}
Assume that the tropical roots of $tr_P$ are at least $2\log 4$ apart   from one another. Assume also that for any $l$ lying between two consecutive tropical indices $m,n,$  inequality \eqref{eq:perturbed binomial 4} is satisfied.	
Then the number of positive (resp. negative) roots of $P$ is equal to the number of positive (resp. negative) tropical roots of $P$.
\end{cor}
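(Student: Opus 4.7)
The plan is to cover the log of the positive real axis by two kinds of intervals: \emph{cluster intervals} $U_i := [\alpha_i - \log 4,\ \alpha_i + \log 4]$ around each tropical root $\alpha_i$ of $tr_P$, and \emph{safe intervals} in the complement. The separation hypothesis $\alpha_{i+1}-\alpha_i\ge 2\log 4$ guarantees that the $U_i$ are pairwise disjoint and the safe intervals are (possibly empty but) well-defined. The argument then proceeds in three steps.

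First, I would show that $P$ has no real root whose log-modulus lies in a safe interval. By construction, every tropical root of $tr_P$ is more than $\log 4 > \log 3$ away from any point of a safe interval, so Lemma~\ref{lem:slopes} yields $P(x)\neq 0$ there, and applying the same lemma to $P(-x)$ rules out negative roots of $P$ with the same log-modulus. Thus every real root of $P$ has log-modulus in some cluster interval $U_i$.

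Second, in each $U_i$ I would apply Lemma~\ref{lem:perturbed binomial} with $[\alpha',\alpha'']=U_i$. Writing $m=k_{i-1}$ and $n=k_i$ for the two consecutive tropical indices that realize the maximum at $\alpha_i$, the three hypotheses of the lemma are met: $\alpha_i$ is the unique tropical root in $U_i$; the endpoints $\alpha_i\pm\log 4$ are $\log 4$ from $\alpha_i$ and, by the $2\log 4$-separation, more than $\log 4$ from every other tropical root; and the bound \eqref{eq:perturbed binomial 4} on the intermediate $|a_l|$ is exactly the second hypothesis of the corollary. Lemma~\ref{lem:perturbed binomial} then gives that the number of real roots of $P$ on $[e^{\alpha_i-\log 4},e^{\alpha_i+\log 4}]$ (resp.\ on $[-e^{\alpha_i+\log 4},-e^{\alpha_i-\log 4}]$) equals the number of positive (resp.\ negative) real roots of the binomial $a_m x^m+a_n x^n$.

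Third, I would match this local count with the combinatorial definition of essential tropical roots. The binomial $a_m x^m+a_n x^n$ has a positive real root precisely when $\sgn(a_m)\neq\sgn(a_n)$ and a negative real root precisely when $\sgn((-1)^m a_m)\neq\sgn((-1)^n a_n)$; these are, by definition, the conditions for the pair $(k_{i-1},k_i)$ to contribute a sign alternation in the sequences $\{\sgn(a_{k_i})\}$ and $\{\sgn((-1)^{k_i}a_{k_i})\}$ respectively, i.e., for $\alpha_i$ to be a positive (resp.\ negative) essential tropical root. Summing over $i$ and combining with Step~1 yields the claimed equalities. The only delicate point — and the place to be careful — is the case when $n-m$ is even and $a_m,a_n$ have opposite signs: the binomial then has both a positive and a negative real root, which is precisely the ``positive-negative essential'' case from Section~1, so the convention that such a root is counted once on each side makes both sides of the equality match.
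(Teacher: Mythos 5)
Your proposal is correct and follows exactly the route the paper intends: the corollary is stated without proof as an immediate consequence of Lemma~\ref{lem:perturbed binomial} (applied on a $\log 4$-neighborhood of each tropical root) together with Lemma~\ref{lem:slopes} (applied on the complement), which is precisely your decomposition, and your sign bookkeeping for the binomial $a_mx^m+a_nx^n$ matches the paper's definition of positive/negative/positive--negative essential tropical roots. The only caveat, which you already flag implicitly, is the boundary-case mismatch between ``at least $2\log 4$ apart'' in the corollary and ``more than $\log 4$ away'' in the lemma's hypothesis (2); the paper is equally loose about strict versus non-strict inequalities here, so this is not a substantive gap.
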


\medskip
We will need a more refined version of Lemma~\ref{lem:perturbed binomial} to take into account the signs of tropical roots.
\begin{lm}\label{lem:perturbed binomial positive}
	Let  $P$ be a real polynomial and let $m<n$ be its two tropical indices with $a_m, a_n>0$.
	Let $U=[\alpha',\alpha'']$ be a real interval such that
	\begin{enumerate}
		\item the tropical index of any $u\in U$ lies in $[m,n]$ and $U$ is more than $\log 4$ away from the tropical roots of $tr_P$ corresponding to the edges of $\widetilde A_P(u)$ lying outside of $[m,n]$,
		\item  for all $l$, $m<l<n,$ we have that either $a_l>0$ or
\begin{equation}
		\log |a_l|\le v(l)-\log d-\log 4,
		\end{equation}
		where $v(u)=\alpha u+\beta$ is the linear function whose graph passes through  $(m,\log |a_m|)$ and $(n,\log|a_n|)$. 
	\end{enumerate}
	Then $P$ has no roots on $I=[e^{\alpha'}, e^{\alpha''}]$.
\end{lm}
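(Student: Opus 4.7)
Since $a_m, a_n > 0$ and $x > 0$ on $I = [e^{\alpha'}, e^{\alpha''}]$, my plan is to show $P(x) > 0$ on $I$ by splitting
\[
P(x) = T_{\mathrm{low}}(x) + a_m x^m + T^{+}_{\mathrm{mid}}(x) + T^{-}_{\mathrm{mid}}(x) + a_n x^n + T_{\mathrm{high}}(x),
\]
where $T_{\mathrm{low}} = \sum_{k<m} a_k x^k$, $T_{\mathrm{high}} = \sum_{k>n} a_k x^k$, and $T^{\pm}_{\mathrm{mid}}$ collects the terms with $m<l<n$ and $\pm a_l > 0$. Since $x>0$, the sum $T^{+}_{\mathrm{mid}}$ is non-negative and drops out of the lower bound, so it suffices to prove $|T_{\mathrm{low}}| + |T^{-}_{\mathrm{mid}}| + |T_{\mathrm{high}}|$ is strictly less than $a_m x^m + a_n x^n$.

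For $T_{\mathrm{low}}$ and $T_{\mathrm{high}}$ I would reuse the estimate from the proof of Lemma~\ref{lem:perturbed binomial}. Hypothesis (1) places every $\xi = \log x \in U$ at distance greater than $\log 4$ from the tropical roots $\xi^{*}_L, \xi^{*}_R$ bordering the region where the dominant index belongs to $[m,n]$. Concavity of $\widetilde A_P$ together with the identity $-\xi^{*}_L = (\log a_m - \log a_p)/(m-p) =$ slope of $\widetilde A_P$ just left of $m$ then yields, by integrating the slope from $k$ to $m$, the uniform bound $|a_k x^k| < 4^{-(m-k)}\,a_m x^m$ for all $k<m$ and $\xi\in U$, and symmetrically $|a_k x^k| < 4^{-(k-n)}\,a_n x^n$ for $k>n$. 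Summing geometric series gives $|T_{\mathrm{low}}| < \tfrac{1}{3}\,a_m x^m$ and $|T_{\mathrm{high}}| < \tfrac{1}{3}\,a_n x^n$.

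For $T^{-}_{\mathrm{mid}}$, hypothesis (2) gives $|a_l| \le e^{v(l)}/(4d)$ for every $l$ contributing. Since $u\mapsto v(u) + u\xi$ is linear and takes the values $\log(a_m x^m)$ at $u=m$ and $\log(a_n x^n)$ at $u=n$, the quantity $e^{v(l)+l\xi}$ is a log-convex combination of $a_m x^m$ and $a_n x^n$ and hence is bounded by $\max(a_m x^m, a_n x^n)$; since at most $d-1$ indices contribute, $|T^{-}_{\mathrm{mid}}| < \tfrac{1}{4}\max(a_m x^m, a_n x^n)$. Combining the three estimates one obtains, for $x\in I$,
\[
P(x) \,>\, \tfrac{2}{3}(a_m x^m + a_n x^n) - \tfrac{1}{4}\max(a_m x^m, a_n x^n) \,>\, 0,
\]
so $P$ has no zeros on $I$. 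The main obstacle is setting up the geometric-decay bound on $T_{\mathrm{low}}, T_{\mathrm{high}}$ in the presence of arbitrary non-tropical indices; this is the concavity-on-the-Newton-polygon argument already implicit in the earlier lemma. The genuinely new content is the sign refinement: positive intermediate coefficients contribute helpful non-negative summands $a_l x^l$, while hypothesis (2) only needs to control the negative ones, and the margin $\log d + \log 4$ in that hypothesis is tight enough to prevent cancellation of the two positive boundary monomials.
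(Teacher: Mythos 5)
Your proof is correct and follows essentially the same route as the paper: bound $\sum_{k<m}|a_kx^k|$ by $\tfrac13 a_mx^m$ and $\sum_{k>n}|a_kx^k|$ by $\tfrac13 a_nx^n$ using the $\log 4$ separation (geometric decay with ratio $1/4$), discard the positive intermediate terms, and use hypothesis (2) to bound the negative intermediate terms by $\tfrac14$ of the dominant pair, giving $P>0$ on $I$. You have merely supplied more detail than the paper's terse argument (and your $\max(a_mx^m,a_nx^n)$ bound for the middle terms is slightly sharper than the paper's $\tfrac14(a_mx^m+a_nx^n)$).
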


\begin{proof} Let $x\in I$.
 As before, the sum $\sum_{k<m}|a_kx^k|$ is at most  $\frac{1}{3}a_mx^m$ on $I$, as in the proof of Lemma~\ref{lem:slopes}. Similarly,  $\sum_{k>n}|a_kx^k|\le \frac 1 3 a_nx^n$ on $I$.
	Also, $\sum'_{m<k<n}|a_k|x^k\le\frac 1 4 \left( a_mx^m+a_nx^n\right)$ on $I$, where the sum is taken over all  monomials with negative coefficients. Therefore $P>0$ on $I$.
\end{proof}

\subsection{Generalized Rolle's theorem}

For a given nonnegative integer $k$, define the differential operator $L_k$ by 
$$L_k\left(\sum a_jx^j\right):=\sum (j-k)a_jx^j.$$
One can easily check that the latter definition is equivalent to 
$$L_k(P):=x^{k+1}\left(x^{-k}P\right)'.$$

\medskip
 The following variation of Rolle's theorem immediately follows from the second definition of $L_k$.
\begin{lm}\label{lem:Rolle}
	Let $I\subset \bR_+$ be some interval, then 
	$$
	\#\{x\in I, L_k(P(x))=0\}\ge \#\{x\in I, P(x)=0\}-1.
	$$
\end{lm}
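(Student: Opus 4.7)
The plan is to reduce this to the classical Rolle theorem applied to the auxiliary function $f(x) := x^{-k} P(x)$. The key algebraic observation is the identity given right before the statement, namely $L_k(P)(x) = x^{k+1}(x^{-k}P(x))' = x^{k+1} f'(x)$. Since $I \subset \bR_+$, the factor $x^{k+1}$ is strictly positive on $I$, so zeros of $L_k(P)$ in $I$ are in bijection with zeros of $f'$ in $I$ (and multiplicities agree). Similarly, $x^{-k}$ is strictly positive on $I$, so the zeros of $f$ in $I$ are exactly the zeros of $P$ in $I$, again with the same multiplicities.

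Next I would invoke Rolle's theorem. Let $\alpha_1 < \alpha_2 < \dots < \alpha_N$ be the distinct real zeros of $P$ in $I$, with multiplicities $m_1,\dots,m_N$, so that the total count with multiplicity is $M = \sum m_i$. Then $f$ is smooth on $\bR_+$ and has the same zero data in $I$. Between each consecutive pair $\alpha_i, \alpha_{i+1}$, Rolle's theorem produces at least one zero of $f'$; and at each $\alpha_i$ with $m_i \geq 2$, the derivative $f'$ inherits a zero of multiplicity $m_i - 1$. Summing, $f'$ has at least $(N-1) + \sum_i (m_i - 1) = M - 1$ zeros in $I$ counted with multiplicity. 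Translating back through the identity $L_k(P) = x^{k+1} f'$, we conclude that $L_k(P)$ has at least $M - 1$ zeros in $I$.

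There is really no substantial obstacle here: the whole content is the identity $L_k(P) = x^{k+1}(x^{-k}P)'$ combined with positivity of $x$ on $I$, which lets Rolle's theorem pass through the multiplication/division by monomial factors without loss. The only point to be careful about is whether zeros are counted with or without multiplicity; the argument above works either way, since in both counts Rolle's theorem drops the total by at most one.
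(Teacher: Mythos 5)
Your proof is correct and is exactly the argument the paper intends: the paper simply remarks that the lemma ``immediately follows from the second definition of $L_k$,'' i.e.\ from the identity $L_k(P)=x^{k+1}\left(x^{-k}P\right)'$, and your write-up fills in that one-liner by applying classical Rolle's theorem to $f(x)=x^{-k}P(x)$ and using positivity of the monomial factors on $I\subset\bR_+$. No discrepancy with the paper's approach, and your care about multiplicities is a welcome (if minor) addition.
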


\medskip 
One can define a natural  tropical counterpart $l_k$ of $L_k$  as 
$$l_k(\{\epsilon_j\}_{j=0}^n)=\{\sgn(j-k)\epsilon_j\}_{j=0}^n,$$
where $\{\epsilon_j\}_{j=0}^n$ is any sequence of real numbers. 
 Evidently, the number of sign changes in $\{\epsilon_j\}$ differs from that in $l_k(\{\epsilon_j\})$ by at most one.

\medskip 
Let $\alpha_k$ be the tropical roots of $tr_P$ in the decreasing order. Let $U$ be a connected component of the  $\rho$-neighborhood of $\{\alpha_k\}$,   where $\rho=\log36d$. 

Denote by $[m,n]$ the maximal  interval such that the restriction of $\widetilde A_P$ to this interval has edges with slopes equal to the tropical roots of $tr_P$ lying in $U$.  (We can assume that $n>m+1$ since the case $n=m+1$ is covered by Lemma~\ref{lem:perturbed binomial}.) 

We choose a sequence $\la=\{\lambda_{k,d}\}_{k=0}^d$ such that 
\begin{equation}\label{eq:def of Delta}
\log \left(\lambda_{k-1,d}^{-1}\lambda_{k,d}^2\lambda_{k+1,d}^{-1}\right)=2\Delta_d:=
\frac{d^2}4\log 36d+(d+1)\log d+\log 4,\quad 1\le k\le d-1.
\end{equation}
Let $q_k=(n_k, \log |a_{n_k}|+\log \lambda_{n_k})$, $k=0,\dots, N$,  be the vertices of $A^\la_P$ on the interval $[m,n]$ in increasing order. Note that $n_0=n$, $n_{N}=m$. Let $\alpha_a>\alpha_{a+1}>\dots >\alpha_b$ will be the tropical roots of $tr_P$ lying in $U$.

\medskip 
Let $\Sigma_U=\{\sgn(a_{n_k})\}$ be the sequence of signs of  $a_{n_k}$. Choose  a sequence $\{m_j\}_{j=1}^M$, $m_j\in\{n_k\}_{k=1}^{N-1}$, such that 

\medskip
\noindent
(i) $l_{m_1} \cdots l_{m_M}(\Sigma_U)$ has no sign changes;

\noindent
  (ii)  $M$ is equal to the number of sign changes of $\Sigma_U$. 
  
  \medskip 
  We can assume that $n>m_1>\dots >m_{M-1}\ge m_M>m$.

\begin{prop}\label{prop:roots of Q}
	The polynomial $Q=L_{m_1}\cdots L_{m_M}(P)$ has no roots in $e^U$. 
\end{prop}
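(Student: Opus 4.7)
The plan is to show that $Q(x)\neq 0$ for every $x\in e^U$ by proving that $Q$ has constant sign on this interval. Explicit computation gives $Q(x)=\sum_j b_j x^j$ with $b_j=a_j\prod_{i=1}^M(j-m_i)$. In particular, $b_j=0$ whenever $j\in\{m_1,\dots,m_M\}$, while for a surviving vertex $j=n_k\notin\{m_1,\dots,m_M\}$ the sign of $b_{n_k}$ equals $\sgn(a_{n_k})\prod_{i=1}^M\sgn(n_k-m_i)$, i.e.\ the corresponding entry of $l_{m_1}\cdots l_{m_M}(\Sigma_U)$. By hypothesis~(i), this latter sequence contains no sign alternation; replacing $P$ by $-P$ if needed, I may assume every surviving $b_{n_k}$ is positive, and the task becomes to prove $Q(x)>0$ on $e^U$.

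The bulk of the argument is to bound every non-surviving contribution against a dominant surviving-vertex term. For $x\in e^U$ set $\mathcal M(x):=\max_{0\le k\le N}(n_k\log x+\log|a_{n_k}|)$. Since $n_0=n$ and $n_N=m$ always survive and $|\prod_{i}(n_k-m_i)|\ge 1$ for any surviving $n_k$, a short walk along the $A^\la_P$-profile from the tropical winner to the nearest surviving vertex produces a surviving index $n_{k^\diamond}$ with $\log|b_{n_{k^\diamond}}|+n_{k^\diamond}\log x\ge\mathcal M(x)-C_1$, for some loss $C_1\le (d+1)\log d$ controlled by the log-concavity gap of $\lambda$. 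For a non-vertex position $l$ strictly between consecutive $A^\la_P$-vertices $n_k$ and $n_{k-1}$ (with $n_k<l<n_{k-1}$, $r_1=l-n_k$, $r_2=n_{k-1}-l$), the fact that $(l,\log|a_l|+\log\lambda_l)$ lies beneath the edge $q_kq_{k-1}$, combined with the strict log-concavity~\eqref{eq:def of Delta}, forces $\log|a_l|\le v(l)-\Delta_d r_1 r_2$ where $v$ is the secant of $\log|a|$ across $[n_k,n_{k-1}]$; after accounting for the $L$-inflation factor $|\prod_i(l-m_i)|\le d^M$, one obtains $\log|b_l|+l\log x\le \mathcal M(x)-\Delta_d r_1 r_2+M\log d$. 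For $j\notin[m,n]$, the separation $\rho=\log 36d$ of $\log x\in U$ from every tropical root of $tr_P$ outside $U$ yields, via the geometric-series argument used in the proof of Lemma~\ref{lem:perturbed binomial}, an exponential decay $|a_j|x^j\le e^{-\rho\cdot \operatorname{dist}(j,[m,n])}\cdot e^{\mathcal M(x)}$, which survives multiplication by $d^M$ once accumulated over all distances up to $d$.

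Combining these three estimates via the precise calibration $2\Delta_d=\tfrac{d^2}{4}\log 36d+(d+1)\log d+\log 4$, every non-surviving term is bounded by $(4d)^{-1}$ times the dominant surviving contribution; summing at most $d+1$ such bad terms yields $Q(x)>0$. The main obstacle will be the ``detour'' used to locate $n_{k^\diamond}$: when the tropical winner itself is annihilated, one must jump to a surviving neighbor along the $A^\la_P$-profile, paying a controlled price, and simultaneously tracking that this price never exceeds $(d+1)\log d$ while fitting all other pieces into the budget supplied by~\eqref{eq:def of Delta} is the delicate accounting here. The $\tfrac{d^2}{4}\log 36d$ summand in~\eqref{eq:def of Delta} is spent defeating the $d^M$-inflation of exterior terms after accumulating over distances up to $d$, while the $(d+1)\log d$ summand absorbs both the detour loss and the $L$-inflation on non-vertex interior terms.
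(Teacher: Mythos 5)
Your opening step --- computing $b_j=a_j\prod_{i=1}^M(j-m_i)$, observing that the signs of the surviving vertex coefficients form the alternation-free sequence $l_{m_1}\cdots l_{m_M}(\Sigma_U)$, and reducing to showing $Q>0$ on $e^U$ --- is exactly how the paper begins, and your overall plan (dominate every potentially negative contribution by the positive ones) is also the paper's, packaged there as a verification of the hypotheses of Lemma~\ref{lem:perturbed binomial positive}. The gaps are in your two quantitative estimates. First, the ``detour'': you compare everything to $\mathcal M(x)=\max_k(n_k\log x+\log|a_{n_k}|)$ and assert that some surviving vertex comes within $(d+1)\log d$ of this maximum. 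This is unjustified and false in general: inside a cluster $U$ consecutive tropical roots of $tr_P$ may be as much as $2\log 36d$ apart, and a long run of consecutive interior vertices may be annihilated, so the drop from the winning vertex to the nearest surviving one can be of order $d^2\log 36d$ (this is precisely the scale of the height bound $\log|a_l|\le\tfrac{d^2}{4}\log 36d$ in the first corollary following Lemma~\ref{lem:quadratic}, not $(d+1)\log d$). The paper avoids the detour altogether: in Lemma~\ref{lem:perturbed binomial positive} the negative interior terms are measured against the affine secant $v$ through the two endpoint monomials, and since $v(l)+l\xi$ is affine in $l$ its maximum over $[m,n]$ is attained at $l=m$ or $l=n$ --- two vertices that always survive and satisfy $|b_m|\ge|a_m|$ and $|b_n|\ge|a_n|$. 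Anchoring on $\max(b_mx^m,b_nx^n)$ rather than on $\mathcal M(x)$ removes the problem.

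Second, the exterior terms. Your bound $|a_j|x^j\le e^{-\rho\,\operatorname{dist}(j,[m,n])}e^{\mathcal M(x)}$ followed by multiplication by the inflation factor $d^M$ does not close: for an exterior index at distance $1$ from $[m,n]$ the decay is only $(36d)^{-1}$, while $d^M$ can be as large as $d^{d}$. What actually saves the day (and is the content of the computation with $\kappa^Q_{l,m}$ in the paper's proof) is that the operators $L_{m_1},\dots,L_{m_M}$ shift each exterior slope of the Newton diagram by at most $\sum_k(m_k-m)^{-1}\le 2+\log d$, via the monotone decrease of $t^{-1}\log(1+t)$; hence the exterior slopes of $\widetilde A_Q$ remain more than $\log 4$ away from $U$, and the exterior sum of $Q$ itself (not of $P$) decays geometrically with ratio below $1/4$. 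This per-slope estimate is exactly why $\rho$ is taken to be $\log 36d$ rather than $\log 4$. Until both points are repaired --- and your closing paragraph concedes the accounting is ``delicate'' rather than carrying it out --- the argument is incomplete.
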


\begin{proof}
	Without loss of generality we can take $a_n>0$. Moreover, by rescaling of $x$  and multiplication of $P$ by a constant, we can assume that $a_n=|a_m|=1$.
	
Let $Q=\sum_{j=0}^{d}b_jx^j$, $b_j=a_j\prod_{k=1}^{M} (j-m_k)$. 	
 We claim that $Q$ satisfies conditions of Lemma~\ref{lem:perturbed binomial positive}.
 
 Let us start with the first condition of Lemma~\ref{lem:perturbed binomial positive}.
 Let $l<m$ and 
 $$
 \kappa^Q_{l,m}=\frac{\log|a_l|+\sum_{k=1}^{M}\log|l-m_k|-\log|a_m|-\sum_{k=1}^{M}\log|m-m_k|}{l-m}
 $$ be the slope of the segment joining the two points in $A_Q$ corresponding to the monomials of degree $l$ and $m$.
 We have 
 \begin{equation}\label{eq:slopes of Q}
 \kappa^Q_{l,m}= \kappa^P_{l,m}-\frac 1 {m-l}\sum_{k=1}^{k_U-1}\log\frac{n_k-l}{n_k-m}.
 \end{equation}
 Elementary computations show that  
 $$
 \frac 1 {m-l}\log\frac{m_k-l}{m_k-m}=\frac 1 {m_k-m}\left(t^{-1}\log(1+t)\right)\le\frac1{m_k-m}, \quad t=\frac{m-l}{m_k-m}>0,
 $$
 as the function $t^{-1}\log(1+t)$ is monotone decreasing.
 
 \medskip
 Therefore the last sum in \eqref{eq:slopes of Q} is bounded from above by $(2+\log d)$; thus   
 $$
 \kappa^Q_{l,m}\ge \alpha_{a-1}-2-\log d,
 $$
 and is  more than $\log 4 $ away from $U$, as  $\rho>2+\log d+\log 4$. Similarly, $\kappa^Q_{l,n}\le \alpha_{b+1}+2+\log d$ for $l>n$.
 This means that all slopes of $\widetilde A_Q$ to the left or to the right of $[m,n]$ are more than $\log 4$ away from $U$ which  shows that the first condition of Lemma~\ref{lem:perturbed binomial positive} is satisfied.


\medskip
To prove the second condition, we use the following elementary statement. 
\begin{lm}\label{lem:quadratic}
	Let $\phi(u)$ be a continuous concave piecewise linear function on $[m,n]$ which is  linear on each segment $[k,k+1]$, $k\in\bZ$; we denote by $\mu_k$ its slope on the latter interval. Assume additionally that $\phi(m)=\phi(n)=0.$  Then, 
	\begin{enumerate}
		\item  if   $0\le m_k-m_{k+1}\le 2C$, then $\phi(u)\le C (m-n)^2/4$;
		\item if  $0\le m_k-m_{k+1}=2\Delta_d$, then $\phi(k)\ge (n-m-1)\Delta_d$ for all $m<k<n, k\in\bZ$.
	\end{enumerate}
\end{lm}

\begin{cor}
\begin{equation}
\log |a_l|\le  \frac{d^2}{4}\log 36d,\; m\le l\le n.
\end{equation}
\end{cor}
\begin{proof}
By definition of $U$, one can apply the first claim of Lemma~\ref{lem:quadratic} to the restriction of $\widetilde A_P$ to the segment $[m,n]$.
\end{proof}
\begin{cor}
	Choose $l\in[m,n]$, $l\in\bZ$ and $l\not\in \{n_k\}$. Then 
	$$
	\log|a_l|\le \frac{d^2}{4}\log 36d-\Delta_d,
	$$
	where $\Delta_d$ is the same as in Theorem~\ref{thm:example}. 
\end{cor}
\begin{proof}
	Condition $l\not\in \{n_k\}$ means that
 $\log|a_l|+\log\lambda_{l,d}< \alpha l +\beta$, where  $\alpha, \beta$ are chosen in such a way that $\alpha m +\beta=\log\lambda_{m,d}$ and $\alpha n +\beta=\log\lambda_{n,d}$. Therefore 
 $$
 \log|a_l|\le -\left(\Theta_d(u)-\alpha u-\beta\right),
 $$
and the bound follows from the second claim of Lemma~\ref{lem:quadratic} applied to $\phi(u)=\Theta_d(u)-\alpha u-\beta$.
\end{proof} 
	
Now, $\log|b_l|=\log|a_l|+\sum\log|m_k-l|\le \log|a_l|+d\log d$. Therefore 
$$
\log |b_l|\le \frac{d^2}{4}\log 36d-\Delta_d+d\log d\le -\log d -4,
$$
which  implies the second condition of Lemma~\ref{lem:perturbed binomial positive}, since both $\log|b_m|,\log|b_n|$ are positive.  
This finishes the proof of Proposition~\ref{prop:roots of Q}.
\end{proof}

\begin{cor}\label{cor:at most M roots}
	Let $M$ be the number of sign changes in $\{a_{n_k}\}$, where $\{{n_k}\}$ are  tropical indices of $tr^\lambda_P$ on the interval $[m,n]$. Then 	$P$ has at most $M$ roots on $e^U$.
\end{cor}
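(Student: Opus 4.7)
The plan is to derive this corollary as an immediate consequence of Proposition~\ref{prop:roots of Q} combined with an iterated application of the generalized Rolle's theorem (Lemma~\ref{lem:Rolle}). Since $U$ is a connected component of a neighborhood on the $\xi$-axis, the set $e^U$ is an interval contained in $\bR_+$, so Lemma~\ref{lem:Rolle} applies on it.

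First I would set $P_0 := P$ and $P_j := L_{m_j} P_{j-1}$ for $j = 1, \dots, M$, so that $P_M = L_{m_1}\cdots L_{m_M}(P) = Q$. Applying Lemma~\ref{lem:Rolle} to $P_{j-1}$ on the interval $I = e^U$ gives
\[
\#\{x \in e^U : P_j(x) = 0\} \ge \#\{x \in e^U : P_{j-1}(x) = 0\} - 1.
\]
Chaining these $M$ inequalities together yields
\[
\#\{x \in e^U : Q(x) = 0\} \ge \#\{x \in e^U : P(x) = 0\} - M.
\]

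Finally, by Proposition~\ref{prop:roots of Q}, the left-hand side equals zero, so we conclude that $P$ has at most $M$ positive real roots on $e^U$.

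Since all the technical work — verifying the hypotheses of Lemma~\ref{lem:perturbed binomial positive} for $Q$ via the slope estimates and the quadratic control from Lemma~\ref{lem:quadratic} — has already been carried out in the proof of Proposition~\ref{prop:roots of Q}, there is essentially no obstacle at this stage; the corollary is a one-line deduction. The only thing worth double-checking is that the operators $L_{m_j}$ indeed preserve the ambient interval $e^U \subset \bR_+$ on which Rolle's-type argument of Lemma~\ref{lem:Rolle} is formulated, which is immediate since $e^U$ is an interval in $\bR_+$ and the lemma is stated for arbitrary such intervals.
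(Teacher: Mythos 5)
Your proof is correct and is exactly the argument the paper intends (its own proof is the one-liner ``Follows from Proposition~\ref{prop:roots of Q} and Lemma~\ref{lem:Rolle}''); you have merely written out the $M$-fold chaining of the Rolle-type inequality explicitly. Note also that the order in which you compose the $L_{m_j}$ is immaterial since these operators are diagonal in the monomial basis and hence commute, so your $P_M$ is indeed the $Q$ of Proposition~\ref{prop:roots of Q}.
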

\begin{proof}
	Follows from Proposition~\ref{prop:roots of Q},  and  Lemma~\ref{lem:Rolle}.
\end{proof}

\begin{proof}[Proof of Theorem~\ref{thm:example}]

Applying Corollary~\ref{cor:at most M roots} to each connected component of the   $\log36d$-neighborhood of the set of tropical roots of $tr_P$ (and using Lemma~\ref{lem:slopes} outside of it), we see that the number of positive roots of $P$ does not exceed the number of positive tropical roots of $tr^\lambda_P$.

Changing $P(x)$ to $P(-x)$, we get the same statement for the negative roots. In particular,  we conclude that $\{\lambda_{k,d}\}$ defined in \eqref{eq:def of Delta} is a  real-to-tropical root preserver.
\end{proof}

\medskip

To prove Theorem~\ref{thm:counterexample}, we need an auxiliary statement. 

\begin{lm}\label{lem:R}
	There exists a polynomial $R$ of degree $100$ with $4$ simple negative roots, whose   leading and constant  coefficients are equal to $1$ and the remaining coefficients are non-negative and strictly less than $1$.
\end{lm}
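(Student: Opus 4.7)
The plan is to construct $R$ explicitly as a palindromic polynomial of the form
\[
R(x)=x^{100}+ax^{99}+bx^{97}+cx^{50}+bx^3+ax+1,\qquad a,b,c\in(0,1).
\]
Palindromicity ($R_k=R_{100-k}$) forces the negative real roots of $R$ to come in pairs $(-r,-1/r)$, so it suffices to produce two simple positive real zeros of a suitable auxiliary even function, which will lift to the required four simple negative roots of $R$.

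Substituting $x=-e^u$ for $u\in\bR$ and using palindromicity, one computes $R(-e^u)=e^{50u}\phi(u)$ where
\[
\phi(u)=2\cosh(50u)-2a\cosh(49u)-2b\cosh(47u)+c
\]
is even in $u$; negative real zeros of $R$ correspond to real zeros of $\phi$. Since $2\cosh(50u)$ dominates as $u\to+\infty$, it is enough to arrange that $\phi(0)>0$ and that $\phi$ dips below zero somewhere on $u>0$. A Taylor expansion at $u=0$ gives
\[
\phi(0)=2+c-2(a+b),\qquad \phi''(0)=5000-4802a-4418b.
\]
The choice $a=0.99$, $b=0.49$, $c=0.99$ makes $\phi(0)=0.03>0$ and $\phi''(0)=-1919<0$; a direct numerical check at, say, $u=0.1$ and $u=0.3$ confirms that $\phi$ crosses zero twice on $(0,\infty)$. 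Combined with Descartes' rule of signs applied to $R(-y)$ --- whose nonzero coefficients form the sign pattern $+,-,-,+,-,-,+$, with four sign changes --- this forces $R$ to have exactly four simple negative real roots, while the coefficient bounds on $R$ hold by construction.

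The subtle point is that the two conditions $\phi(0)>0$ and $\phi''(0)<0$ pull in opposite directions: increasing $a,b$ drives $\phi''(0)$ down (good) but also drives $\phi(0)$ down (bad). The resolution is to place the ``negative mass'' at the extremal exponents $k=1,3$ (and their palindromic mirrors $97,99$), where the weight $(50-k)^2$ is close to maximal and thus contributes heavily to $\phi''(0)$ per unit contribution to $\phi(0)$, while compensating $\phi(0)$ with a positive central term $cx^{50}$ whose weight $(50-50)^2=0$ does not affect $\phi''(0)$ at all. This is the main obstacle, and it is precisely why the construction succeeds with all middle coefficients strictly below $1$.
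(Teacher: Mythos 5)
Your construction is correct, and it is genuinely different from the paper's. The paper builds $R$ softly: it forms $Q_4(x)=(x+1)(x^3+1)(x^5+1)(x^{11}+1)$ (degree $20$, all coefficients $0$ or $1$, divisible by $(x+1)^4$), substitutes $x^5$ to reach degree $100$ with a $4$-fold root at $-1$, splits that root into four simple ones by adding successively smaller positive multiples of $(x+1)^3$, $(x+1)^2$, $(x+1)$ (possible because $Q_4(x^5)$ has no monomials in degrees $1,2,3$), and finally rescales to restore the normalization $a_0=a_{100}=1$. You instead write down an explicit sparse palindromic polynomial and reduce the root count to sign changes of the even function $\phi(u)=2\cosh(50u)-2a\cosh(49u)-2b\cosh(47u)+c$; the chain $\phi(0)=0.03>0$, $\phi(0.1)<0$, $\phi(0.3)>0$, together with evenness, gives at least four distinct negative roots of $R$, and Descartes' rule applied to $R(-y)$ (four sign changes) caps the count with multiplicity at four, so all four are simple. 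Your computations check out: $\phi(0)=2+c-2(a+b)$, $\phi''(0)=5000-4802a-4418b$, and the numerical evaluations at $u=0.1$ and $u=0.3$ are accurate. What each approach buys: the paper's argument avoids any numerical verification but relies on qualitative ``sufficiently small $\epsilon$'' perturbations and a final rescaling, and one must check that the successive perturbations really do split the roots without losing negativity or simplicity; yours is fully explicit with named coefficients and pins down ``exactly four, all simple'' in one stroke via the Descartes upper bound, at the cost of two elementary hand-checkable inequalities. Either proof serves the role the lemma plays in Theorem~\ref{thm:counterexample}.
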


\begin{proof}[Proof of Lemma~\ref{lem:R}]
	Set $Q_1(x)=x+1$ and define $Q_{k+1}(x)=Q_k(x)(x^n+1),\; k=2,3,\dots$, where $n$ is the smallest odd number greater than $\deg Q_k$. Note that 
	\begin{enumerate}
		\item all coefficients of $Q_k$ are either $1$ or $0$, 
		\item $Q_k(x)$ is divisible by $(x+1)^k$.
	\end{enumerate}
	
	Take $Q_4(x^5)$ (which has a root of multiplicity 4 at $-1$),  add  some small positive multiple of  $(x+1)^3$ to split of  a simple real root from the $4$-tuple root at $-1$, then add an even smaller positive multiple of  $(x+1)^2$  to split of  another simple root from $-1$, and then add an even smaller multiple of $x+1$ to split of the third simple root. (Note that $Q_4(x^5)$ has no monomials of degree $1,2,3$.) 
	
	The resulting perturbation $\tilde{Q}_4$ has four negative roots, is of degree $100$, has a leading term equal to $1$, the constant term $a_0>1$, and all the remaining coefficients at most $1$. (All of them are equal to either $0$ or $1$ except in degrees $1,2,3$, where they are  small  positive numbers). Define $R= a_0^{-1}\tilde{Q}_4(a_0^{1/100}x) = x^{100}+\dots+1$, with all other coefficients non-negative and smaller than $a_0^{-1/100}$.
\end{proof}

\begin{proof}[Proof of Theorem~\ref{thm:counterexample}]
	Starting with the above polynomial $R$, we construct a polynomial $P$ with $4$ negative roots and with only three tropical roots. Note that 
$$
A_R(u)\le \widetilde A_R(u)\equiv 0\qquad\text{for } 0\le u\le 100,
$$ with equality for $u=0$ and $100$ only.

Choose $c>0$ in Theorem~\ref{thm:counterexample} such that $A_R(u)\le -cu(100-u)$ for $0\le u\le 100$.  Inequality  \eqref{eq:counterexample} implies that $
\Theta_d(u)$  is almost flat on the interval $[k,k+100]$,  see Remark 7. More exactly, there exists a  linear function $\ell(u)$ such that,  
$$
\Theta_d(u)\le \ell(u)+cu(100-u), \quad k\le u\le k+100,
$$
with equality for $u=k, k+100$. Therefore $A_{x^kR}(u)+\Theta_d(u)\le \ell(u )$ for $0\le u\le 100$, with equality for $u=k, k+100$ (i.e., lies below its chord on $[k,k+100]$). Therefore $A^\lambda_{x^kR}(u)$ is linear, and $tr^\lambda_{x^kR}(\xi)$ has just one tropical root.

Now, choose $\delta>0$  so small  that $P=\delta(x^d+1)+x^kR$ still has $4$ negative simple roots. Then $tr^\lambda_P(\xi)$ has at most $3$ tropical roots, 
since only two extra monomials were added. The latter choice of $P$ settles  Theorem~\ref{thm:counterexample}.
\end{proof}


\section{Proposition~\ref{pr:6}   }

We start with some explicit information about $\Lambda_d$ and $\Lambda^+_d$ for small $d$, compare to Theorem~\ref{th:main}. 

\begin{lm}
\label{lm:ExplicitSets} 
\begin{enumerate}
\item For $d=1,$  $\Lambda^+_1 = \Lambda_1 = \bR_+$; 
\item For $d=2,$ $\Lambda^+_2 = \Lambda_2 = \{\lambda\, |\, 4\lambda_1^2\geq \lambda_0\lambda_2\}$.
\end{enumerate}
\end{lm}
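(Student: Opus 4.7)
The plan is to reduce the lemma to an explicit criterion for when the middle index is tropical, and then to chase the (very short) case list on the signs of the coefficients. For $d=1$ the statement is immediate: if $P=a_0+a_1x$ has both coefficients nonzero then both indices $\{0,1\}$ are tropical indices of $T_\lambda[P]$ for every positive $\lambda$, so $tr^\lambda_P$ has a unique tropical root. The two sign sequences $(\sgn(a_0),\sgn(a_1))$ and $(\sgn(a_0),-\sgn(a_1))$ differ in exactly one entry, hence exactly one of them contains a sign alternation; this gives one essential tropical root of the correct sign, matching the single nonzero real root of $P$. Thus $\Lambda_1=\Lambda_1^+=\bR_+$.

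For $d=2$ I would first derive the key tropical-index criterion. An elementary optimization (the optimal test point is $x_1=\sqrt{\lambda_0|a_0|/(\lambda_2|a_2|)}$) shows that $1$ is a tropical index of $T_\lambda[P]$ if and only if
\[
\lambda_1^2 a_1^2 \;\geq\; \lambda_0\lambda_2\,|a_0 a_2|.
\]
The analysis then splits on the sign of $a_0 a_2$. When $a_0 a_2<0$, $P$ has exactly one positive and one negative real root, and whether or not $1$ is a tropical index, the edge from index $0$ to index $2$ (even length, opposite-sign coefficients) produces a positive--negative essential root, giving two essential tropical roots for free. When $a_0 a_2>0$ the real roots (if any) share a common sign; $P$ has two of them exactly when $a_1^2\geq 4a_0 a_2$, and a direct sign-change count at the indices $0,1,2$ shows that one obtains two essential tropical roots of the correct sign precisely when $1$ is a tropical index.

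Combining these cases, $\lambda\in\Lambda_2$ becomes equivalent to the requirement that $a_1^2\geq 4a_0 a_2$ implies $\lambda_1^2 a_1^2\geq \lambda_0\lambda_2 a_0 a_2$. Sufficiency of $4\lambda_1^2\geq\lambda_0\lambda_2$ is immediate from the chain $\lambda_1^2 a_1^2\geq 4\lambda_1^2 a_0 a_2\geq\lambda_0\lambda_2 a_0 a_2$; necessity follows by testing the double-root polynomial $P(x)=a_0+2\sqrt{a_0 a_2}\,x+a_2 x^2$, for which $a_1^2=4 a_0 a_2$ forces $4\lambda_1^2\geq\lambda_0\lambda_2$. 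Because this extremal $P$ has positive coefficients, the same identification holds for $\Lambda_2^+$. The only delicate aspect I expect is the bookkeeping with vanishing intermediate coefficients and with double roots, where one must track real-root multiplicities carefully against essential tropical-root counts; everything else is pure Descartes-style counting.
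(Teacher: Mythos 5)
Your argument is correct and follows essentially the same route as the paper: reduce to the quadratic case, observe that $a_0a_2<0$ automatically yields two essential tropical roots, and for $a_0a_2>0$ compare the discriminant condition $a_1^2\geq 4a_0a_2$ with the tropical-index criterion $\lambda_1^2a_1^2\geq\lambda_0\lambda_2 a_0a_2$, with the double-root polynomial showing sharpness. The paper merely streamlines this by normalizing to $P(x)=1+x+ax^2$ and requiring $\lambda_1^2\geq a\lambda_0\lambda_2$ for all $0\le a\le\tfrac14$, which is the same computation.
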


\begin{proof}
(1) Note that it is enough to consider only fully supported polynomials $P$. 
Then, by normalization, we can assume that $a_0 = a_1 = 1$. For $d=1$ there is nothing to prove. 

\noindent (2) For $d=2$, consider a polynomial $P(x) = 1 + x + a x^2.$ 
 Then, $P(x)$ has two real roots if and only if  $a \leq \frac14$. 
If $a < 0$, then $tr_P^\dagger(\xi)$ has two essential tropical roots for all $a$.  Thus it suffices to consider only the case $a>0$. We need to compare the above inequality to the condition that  the tropical polynomial
\[
tr^\lambda_P(\xi) = \max\big(\ln \lambda_0,\, \xi + \ln \lambda_1,\, 2\xi+ \ln a + \ln\lambda_2 \big),
\]
has two tropical roots. One can easily check that this happens if and only if $\lambda_1^2\geq  a\lambda_0\lambda_2$.
This inequality holds for all $0\le a \le \frac14$ if and only if $4\lambda_1^2\geq  \lambda_0\lambda_2$.
Clearly, the latter inequality is necessary and sufficient also if we restrict ourselves to polynomials
with positive coefficients.
\end{proof}

\begin{lm}
\label{lem:degree4}
For $d=4$,  $\Lambda_4^+$ contains the set defined by the system of inequalities: 
\begin{equation}
\label{eqn:Degree4}
\begin{cases}
2\lambda_1^2 \geq \lambda_0\lambda_2, \quad
9\lambda_2^2 \geq  4\lambda_1\lambda_3, \quad
2\lambda_3^2 \geq \lambda_2\lambda_4, \\
2(\sqrt[4]{3}-1)\lambda_1^4\geq \sqrt[4]{3}\,\lambda_0^3\lambda_4, \quad
2(\sqrt[4]{3}-1)\lambda_3^4\geq \sqrt[4]{3}\,\lambda_0\lambda_4^3.
\end{cases}
\end{equation}
\end{lm}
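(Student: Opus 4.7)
I would argue by enumerating the possible tropical hulls (sets of tropical indices) of a positive-coefficient degree-$4$ polynomial $P=\sum_{k=0}^4 a_k x^k$. Writing $b_k=a_k\lambda_k$, the tropical hull is the set of vertices of the upper concave hull of the points $(k,\ln b_k)_{k=0}^{4}$ and always contains $0$ and $4$. An elementary enumeration shows that the number $T(P)$ of essential tropical roots of $tr_P^\lambda$ equals $0$ precisely when the hull is $\{0,2,4\}$ or $\{0,4\}$ (all gaps even), equals $4$ precisely when the hull is $\{0,1,2,3,4\}$ (all gaps odd), and equals $2$ in every remaining pattern. Since a positive-coefficient $P$ has only negative real roots and complex roots come in conjugate pairs, the number $N(P)$ of negative real roots lies in $\{0,2,4\}$. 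Hence the desired bound $T(P)\ge N(P)$ reduces to two claims: (a) if $T(P)=0$ then $N(P)=0$; (b) if $N(P)=4$ then $T(P)=4$.

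For (a) I would treat the two hull patterns separately. If the hull is $\{0,2,4\}$, then non-tropicality of $1$ and $3$ gives $b_1^2<b_0b_2$ and $b_3^2<b_2b_4$, which combined with $2\lambda_1^2\ge\lambda_0\lambda_2$ and $2\lambda_3^2\ge\lambda_2\lambda_4$ yield the strict inequalities $a_1^2<2a_0a_2$ and $a_3^2<2a_2a_4$; the decomposition
\[
P(-x)=\bigl(a_0-a_1x+\tfrac{a_2}{2}x^2\bigr)+\bigl(\tfrac{a_2}{2}x^2-a_3x^3+a_4x^4\bigr)
\]
then presents $P(-x)$ as a sum of two positive-definite quadratics. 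If the hull is $\{0,4\}$, non-tropicality of index $1$ gives $b_1^4<b_0^3b_4$, and inequality $2(\sqrt[4]{3}-1)\lambda_1^4\ge\sqrt[4]{3}\lambda_0^3\lambda_4$ yields $a_1^4<C\,a_0^3a_4$ with $C:=2(\sqrt[4]{3}-1)/\sqrt[4]{3}$; symmetrically $a_3^4<C\,a_0a_4^3$. One checks $C<1$ (equivalent to $\sqrt[4]{3}<2$). Rescaling so that $a_0=a_4=1$ and invoking the factorisation $1+x^4-x-x^3=(1-x)^2(1+x+x^2)\ge 0$, I get $a_1x+a_3x^3<C^{1/4}(x+x^3)\le C^{1/4}(1+x^4)<1+x^4$, whence $P(-x)>a_2x^2\ge 0$.

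For (b) I would argue by contradiction. If $N(P)=4$, then $P$ is fully real-rooted, and Newton's inequalities applied to the elementary symmetric functions of its negative roots give $3a_1^2\ge 8a_0a_2$, $4a_2^2\ge 9a_1a_3$, and $3a_3^2\ge 8a_2a_4$. Multiplying each by the corresponding inequality of the lemma $2\lambda_1^2\ge\lambda_0\lambda_2$, $9\lambda_2^2\ge 4\lambda_1\lambda_3$, $2\lambda_3^2\ge\lambda_2\lambda_4$, I obtain $b_k^2\ge b_{k-1}b_{k+1}$ for each $k=1,2,3$, so the tropical hull is the full $\{0,1,2,3,4\}$ and $T(P)=4$, contradicting the hypothesis $T(P)\le 2$ of (b).

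The main obstacle I foresee is the hull $\{0,4\}$ sub-case of (a), since the other two parts reduce to elementary discriminant reasoning or to the multiplicative composition of already known inequalities, whereas here the specific algebraic identity $1+x^4-x-x^3=(1-x)^2(1+x+x^2)$ is essential, and it only yields the conclusion because the constant in the lemma's inequality on $\lambda_1^4$ was chosen strictly below unity. It is worth noting that inequality $9\lambda_2^2\ge 4\lambda_1\lambda_3$ is sharp in the sense that the constant $4/9$ is exactly the reciprocal of the Newton constant $9/4$, while $2\lambda_1^2\ge\lambda_0\lambda_2$ and $2\lambda_3^2\ge\lambda_2\lambda_4$ have slack over the $3/8$ strictly needed for (b), and this slack is what lets the same two inequalities drive the splitting in the $\{0,2,4\}$ sub-case of (a).
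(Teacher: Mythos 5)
Your proof is correct, but for the decisive part it takes a genuinely different route from the paper's. The fully real-rooted case (your claim (b)) coincides with the paper's first step: Newton's inequalities $3a_1^2\ge 8a_0a_2$, $4a_2^2\ge 9a_1a_3$, $3a_3^2\ge 8a_2a_4$ combine with the first row of \eqref{eqn:Degree4} to force log-concavity of $\{\lambda_ka_k\}$, hence the full hull. For the remaining case the paper works forward: it reduces (by a deformation argument --- decreasing $a_1,a_2,a_3$ only lowers the essential tropical count) to polynomials with a real double root, parametrizes these as $(r+x)^2(r^{-2}+sx+x^2)$, demands that the linear (resp.\ cubic) term dominate both even-order terms at an explicit point $\xi_1$, and extracts the constants $2$ and $2(\sqrt[4]{3}-1)/\sqrt[4]{3}$ as the suprema $\sup_r 1/(3-r^4)=1/2$ and $\sup_r r/(2-2\cdot 3^{-1/4}r^3)=\sqrt[4]{3}/(2(\sqrt[4]{3}-1))$ over $0<r\le 1$, $s\ge -2/\sqrt[4]{3}$. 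You instead prove the contrapositive: if the essential tropical count is zero the hull is $\{0,2,4\}$ or $\{0,4\}$, the failed hull conditions transfer through the same constants into $a_1^2<2a_0a_2$, $a_3^2<2a_2a_4$, respectively $a_1^4,\,a_3^4<C$ with $C=2(\sqrt[4]{3}-1)/\sqrt[4]{3}<1$, and positivity of $P(-x)$ on $(0,\infty)$ follows from the splitting into two positive-definite quadratics, respectively from $1+x^4-x-x^3=(1-x)^2(1+x+x^2)$. Your version is more self-contained --- it avoids both the deformation-to-the-boundary step (which the paper leaves informal) and the two-parameter optimization --- and it makes structurally clear why the quartic inequalities need a constant strictly below $1$; the paper's version has the complementary virtue of showing that the constants are sharp for this method, being attained as suprema over the double-root family. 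Both arguments use all five inequalities of \eqref{eqn:Degree4} in the same roles.
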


\begin{proof}
As we consider only $P$ with positive coefficients, we can without loss of generality  restrict ourselves to the case $a_0 = a_4 = 1$,
i.e. 
\[
P(x) = 1 + a_1 x + a_2 x^2 + a_3x^3 + x^4. 
\]
We compare the appearance of its real roots with  the appearance of tropical roots of  the tropical polynomial 
\[
tr^\lambda_P(\xi) = \max\big(\ln\lambda_0,\, \xi + \ln a_1  + \ln\lambda_1,\, 2\xi + \ln a_2  + \ln\lambda_2,\, 3\xi+\ln a_3  + \ln\lambda_3,\, 4\xi+\ln\lambda_4\big), 
\]
where $\la_0,\dots, \la_4$ are variables. 
For real-rooted polynomials, we obtain the inequalities: 
\[
8\lambda_1^2 \geq  3\lambda_0\lambda_2, \quad
9\lambda_2^2 \geq  4\lambda_1\lambda_3, \quad
8\lambda_3^2 \geq 3\lambda_2\lambda_4.
\]
Let us now consider polynomials  $P(x)$ with exactly two real roots. 
When decreasing $a_1, a_2,$ and $a_3$ simultaneously, 
one can only decrease the
number of essential tropical roots. Therefore 
it suffices to prove the statement for
polynomials $P(x)$ with a real double root  only.  
With our normalization, such a polynomial can be written as
\begin{align*}
P(x) & = (r+x)^2\left(r^{-2}+s x +x^2\right) \\
      & = 1 + \left(2r^{-1} + sr^2\right)x + \left(r^{-2} + 2sr + r^2\right)x^2 + \left(2r + s\right)x^3+ x^4.
\end{align*}
Associated  tropical polynomials are of the form
\begin{align*}
tr_P(\xi) = \max \Big(\ln\lambda_0, \,& \,
\xi + \ln\left(2r^{-1} + sr^2\right) + \ln\lambda_1,\\
& 2\xi + \ln\left(r^{-2} + 2sr + r^2\right)+ \ln\lambda_2,\\
& 3\xi + \ln\left(2r + s\right) + \ln\lambda_3,\quad 4\xi+\ln \lambda_4\Big).
\end{align*}
We will divide our consideration into two cases. If $r \leq 1$, then we will require that the first order term dominates the even order terms at some point. If $r \geq 1$ we will require that the third order term dominates the even order terms at some point.
In the first case, we consider the point
\[
\xi_1 = - \ln(2r^{-1} + s r^2) - \ln\lambda_1+\ln\lambda_0
\]
and obtain the inequalities
\[
\frac{\lambda_1^2}{\lambda_0\lambda_2} \geq \frac{1 + 2sr^3 + r^4}{(2 + s r^3)^2 }
\quad \text{and} \quad
\frac{\lambda_1^4}{\lambda_0^3\lambda_4}\geq \frac{r^4}{(2 + s r^3)^4}.
\]
Since we require the coefficients of $P$ to be positive, 
it is sufficient that these inequalities are valid for all $0<r\le 1$ and $s \geq -\frac{2}{\sqrt[4]{3}}$.
We find that
\begin{align*}
&\sup_{r,s}\frac{1 + 2sr^3 + r^4}{(2 + s r^3)^2} = \sup_r \frac{1}{3-r^4} = \frac12.
\intertext{and that}
&\sup_{r,s}\frac{r}{2+sr^3}= \sup_r \frac{r}{2-\frac{2}{\sqrt[4]{3}} r^3}  = \frac{\sqrt[4]{3}}{2(\sqrt[4]{3}-1)}.\\
\end{align*}
Thus, in  case $r \leq 1$ we obtain the inequalities
\[
2\lambda_1^2 \geq \lambda_0\lambda_2
\quad \text{and} \quad
2(\sqrt[4]{3}-1)\lambda_1^4\geq \sqrt[4]{3}\,\lambda_0^3\lambda_4.
\]
By symmetry,  for $r \geq 1,$ we obtain the inequalities
\[
2\lambda_3^2 \geq \lambda_2\lambda_4
\quad \text{and} \quad
2(\sqrt[4]{3}-1)\lambda_3^4\geq \sqrt[4]{3}\,\lambda_0\lambda_4^3.
\]
Altogether, we derived the system \eqref{eqn:Degree4}.
\end{proof}


\begin{proof}[Proof of Proposition~\ref{pr:6}]
Up to degree $3$, the statement is covered by Lemma~\ref{lm:ExplicitSets}, as there is nothing to
prove in the case of a cubic polynomial with one real root.
The case of degree $4$ follows immediately from Lemma \ref{lem:degree4}.
%
\end{proof}

%
%

\section{Application to zero-diminishing sequences}\label{sec:appl}

We start with the following standard definition, see e.g., \cite {CC2}, \cite {CC3}.

\begin{defi} A sequence $\Gamma=\{\lambda_k\}_{k=0}^d$ of real numbers is called a 
{\em complex zero decreasing sequence in degree $d$}  {\rm(}a  {\rm CZDS in degree $d$},  for short{\rm)}  if, for any polynomial $P=a_0+a_1x+\dots+a_dx^d$ with real coefficients,  the polynomial $T_\lambda(P)=\lambda_0a_0+\lambda_1a_1x+\dots+\lambda_da_dx^d$ has no more non-real roots than $P$.

 A sequence $\Gamma=\{\lambda_k\}_{k=0}^\infty$ of real numbers is called a 
{\em complex zero decreasing sequence }  {\rm(}a  {\rm CZDS},  for short{\rm)}  if for every $d\in \N$ the sequence $\Gamma=\{\lambda_k\}_{k=0}^d$  is a   CZDS in degree $d$.
\end{defi}
 
Laguerre's classical result from 1884 gives the so far best recipe how to generate such sequences. Namely, 

\begin{tm}[p.~116 of \cite {La}]\label{th:Lag}
For any real polynomial $f(z)$ with all strictly negative roots, the sequence $\{f(n)\},\; n=0,1,\dots  $ 
is a CZDS. 
\end{tm}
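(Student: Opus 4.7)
The plan is to reduce the theorem to two ingredients: closure of the class of CZDS under componentwise multiplication, and the fact that for each $\al>0$ the specific arithmetic sequence $\{n+\al\}_{n\ge 0}$ is a CZDS. Since $f$ is real with all strictly negative roots, it factors as $f(z)=c\prod_{i=1}^{m}(z+\al_i)$ with $\al_i>0$ and $c\ne 0$, and the scalar $c$ does not affect root counts. Hence $\{f(n)\}=c\cdot\prod_i\{n+\al_i\}$ and it suffices to establish both items separately. The product closure is immediate: the diagonal operator corresponding to $\{\la_k\mu_k\}$ is the composition $T_\la\circ T_\mu$, and the composition of two operators, each of which does not increase the number of non-real roots, is an operator of the same type.

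For the second ingredient I would use a classical Rolle argument with a fractional integrating factor. For $P(x)=\sum_{n=0}^d a_nx^n$ set
\[
T_\al[P](x):=\sum_{n=0}^d(n+\al)a_nx^n=xP'(x)+\al P(x).
\]
On $x>0$ introduce the auxiliary function $h(x):=x^\al P(x)$; a direct computation gives $h'(x)=x^{\al-1}T_\al[P](x)$, so the positive zeros of $T_\al[P]$ coincide, with multiplicity, with the positive critical points of $h$. Denote the positive roots of $P$ by $0<y_1<\dots<y_s$ with multiplicities $m_1,\dots,m_s$ summing to $p$. Since $\al>0$, the function $h$ extends continuously to $[0,\infty)$ with $h(0)=0$, vanishes to order $m_i$ at $y_i$, and is smooth on $(0,\infty)$. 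Applying Rolle's theorem on $[0,y_1]$ and on each $[y_i,y_{i+1}]$, together with the automatic zero of order $m_i-1$ of $h'$ at $y_i$, yields at least $1+\sum_{i=1}^s(m_i-1)+(s-1)=p$ zeros of $h'$ on $(0,\infty)$ counted with multiplicity, hence at least $p$ positive zeros of $T_\al[P]$.

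The negative roots are handled by symmetry: setting $\widetilde P(x):=P(-x)$, a short calculation shows $T_\al[P](-x)=T_\al[\widetilde P](x)$, so the negative roots of $T_\al[P]$ correspond to the positive roots of $T_\al[\widetilde P]$, whose count is at least the number of negative roots of $P$ by the argument above. Finally, if $P=x^kQ$ with $Q(0)\ne 0$, then $T_\al[P]=x^k\bigl((k+\al)Q+xQ'\bigr)$ with $(k+\al)Q(0)\ne 0$, so the multiplicity at the origin is preserved; and $\deg T_\al[P]=\deg P$ because its leading coefficient is $(d+\al)a_d\ne 0$. Summing the positive, negative and zero contributions shows that $T_\al[P]$ has at least as many real roots as $P$ and therefore at most as many non-real roots, which is the CZDS property for $\{n+\al\}$.

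The only subtle point is the Rolle step on $[0,y_1]$, where $h$ is merely continuous (not $C^1$) at $0$ when $\al<1$; but the classical Rolle theorem requires only continuity on the closed interval and differentiability on its interior, so this presents no real obstruction. Modulo this routine verification, the entire argument hinges on the observation that the fractional integrating factor $x^\al$ converts the first-order operator $xD+\al$ into a pure derivative of $x^\al P(x)$, which is the essential mechanism behind Laguerre's theorem.
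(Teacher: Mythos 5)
Your argument is correct, and it is essentially Laguerre's original proof; note that the paper itself gives no proof of this statement but simply cites it as a classical result from \cite{La}, so there is nothing in the text to compare against. Your two reductions are both sound: the factorization $f(z)=c\prod_i(z+\al_i)$ with $\al_i>0$ together with $T_{\la\mu}=T_\la\circ T_\mu$ legitimately reduces everything to the single sequence $\{n+\al\}$, and the identity $\frac{d}{dx}\bigl(x^{\al}P(x)\bigr)=x^{\al-1}\bigl(xP'(x)+\al P(x)\bigr)$ is exactly the integrating-factor mechanism Laguerre used. The Rolle count $1+\sum_i(m_i-1)+(s-1)=p$ is right (the endpoint $x=0$ is a legitimate zero of $h$ because $\al>0$, and Rolle needs only continuity at the endpoints), the reflection $T_\al[P](-x)=T_\al[\widetilde P](x)$ handles negative roots, the order of vanishing at the origin and the degree are preserved because $k+\al>0$ and $d+\al>0$, and since non-real roots come in conjugate pairs the inequality on real roots converts into the required inequality on non-real roots. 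The only point worth making explicit is that the zeros of $h'$ produced by Rolle in the open intervals are distinct from the zeros of order $m_i-1$ at the $y_i$, so the multiplicities genuinely add; you use this implicitly and it is fine. In short: a complete and correct proof of a statement the paper leaves as a citation.
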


On p.\ 382 of his well-known book \cite {Ka}, S.~Karlin posed the problem of characterizing the inverses of CZDS which are called {\em zero-diminishing sequences}  ({\rm ZDS},  for short{\rm}). This problem is sometimes referred to as the Karlin problem.\footnote{ In \cite{CC2} the   authors initially claimed that they have solved Karlin's problem,  but later they discovered a mistake  in the presented solution.} 
Substantial information about CZDS can be found in section 4 of \cite{CC4} and a number of earlier papers.  
Several interesting attempts to find the converse of Laguerre's theorem and to solve the Karlin problem were carried out over the years, the most successful  of them 
apparently being \cite {BCC} and  \cite {BR2}. (For the history of the subject consult  \cite{CC2} and \cite {Pi}.) But inspite  of some hundred 
and thirty years passed since the publication of \cite{La} and  certain  partial progress,  satisfatory  characterization of the sets of all complex zero decreasing sequences and/or  of all zero-diminishing sequences is still unavailable  at  present.  In particular, it is still unknown whether the rapidly decreasing  sequence    $\{ e^{-k^\alpha} \}_{k=0}^\infty$ with $\al>2$ is a  CZDS. 

\medskip 
We will now illustrate how the theory developed in this paper can be applied
to obtain new results regarding CZDS.

\begin{tm}
\label{thm:CZDS}
Let $\lambda^* = \{\lambda^*_{k,j}\}_{0\le k\le j, j\in \bN }$ be a triangular 
real-to-tropical root preserver.
Let $\lambda = \{\lambda_k\}_{k=0}^d$ be a sequence of positive numbers.
If the set of central indices of the polynomial
\[
Q_d(x) = \sum_{k=0}^d \frac{\lambda_k}{\lambda^*_{k,d}}\, x^k
\]
is equal to $\{0,1, \dots, d\}$, i.e., $Q_d(x)$ is  sign-independently real rooted, then $\lambda$ is a CZDS in degree $d$.

In particular, if any initial segment $\{\lambda_k\}_{k=0}^d$  of a sequence $\{\lambda_k\}_{k=0}^\infty$ satisfies this condition then $\{\lambda_k\}_{k=0}^\infty$ is a CZDS.
\end{tm}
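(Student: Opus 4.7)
The plan is to sandwich the number of nonzero real roots of $P$ between two tropical quantities. Applying the real-to-tropical preserver property of $\lambda^*$ directly to $P$ yields
\[
\#_{\bR}(P) \;\leq\; N^*\;:=\;\#\bigl\{\text{essential tropical roots of }tr^{\lambda^*}_P\bigr\},
\]
and the remaining task is to establish $N^*\leq \#_{\bR}(T_\lambda P)$ by reading off the tropical data of $tr^{\lambda^*}_P$ as real sign information for $T_\lambda P$.

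The core technical step is that every tropical index $k$ of $tr^{\lambda^*}_P$ is a \emph{central} index of $T_\lambda P$. Given such a $k$, pick a witness $x_k>0$ with $\lambda^*_{k,d}|a_k|x_k^k\geq \lambda^*_{j,d}|a_j|x_k^j$ for all $j$, and, using the hypothesis together with Proposition~\ref{pro:SignIndependently}, pick a witness $y_k>0$ for $k$ being a central index of $Q_d$:
\[
\frac{\lambda_k}{\lambda^*_{k,d}}\,y_k^k\;\geq\;\sum_{j\neq k}\frac{\lambda_j}{\lambda^*_{j,d}}\,y_k^j.
\]
Setting $z_k:=x_ky_k$ and chaining the two inequalities gives
\[
\sum_{j\neq k}\lambda_j|a_j|z_k^j \;\leq\; \lambda^*_{k,d}|a_k|x_k^k\sum_{j\neq k}\frac{\lambda_j}{\lambda^*_{j,d}}y_k^j \;\leq\; \lambda_k|a_k|z_k^k,
\]
so that $\sgn\bigl((T_\lambda P)(z_k)\bigr)=\sgn(a_k)$ and $\sgn\bigl((T_\lambda P)(-z_k)\bigr)=\sgn\bigl((-1)^k a_k\bigr)$.

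Listing the tropical indices of $tr^{\lambda^*}_P$ as $k_0<k_1<\cdots<k_m$, each $z_{k_i}$ lies in the tropical dominance region of $k_i$ for the standard tropicalization $tr_{T_\lambda P}$; these regions are mutually disjoint intervals of $\bR_+$ arranged monotonically in the index, so the witnesses can be taken with $z_{k_0}<z_{k_1}<\cdots<z_{k_m}$. The intermediate value theorem then produces a distinct positive real root of $T_\lambda P$ from every sign alternation in $\{\sgn(a_{k_i})\}_i$ and a distinct negative real root from every alternation in $\{\sgn((-1)^{k_i}a_{k_i})\}_i$. By the very definition of positive, negative and positive-negative essential tropical roots, the total number of such alternations equals $N^*$. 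Combined with the opening inequality this gives $\#_{\bR}(P)\leq \#_{\bR}(T_\lambda P)$; since $T_\lambda$ scales each monomial by a positive constant the multiplicity of the root at the origin is preserved, and the full CZDS property in degree $d$ follows. The infinite-sequence statement is immediate by applying the degree-$d$ conclusion to each initial segment of $\{\lambda_k\}_{k=0}^\infty$.

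The chain of inequalities in the central paragraph is routine once the substitution $z_k=x_ky_k$ is in place; the main obstacle is the bookkeeping around ordering and degeneracy: verifying that the witnesses $z_{k_i}$ can indeed be chosen strictly increasing in $i$ (via the ordering of the tropical dominance intervals for $tr_{T_\lambda P}$, which is a concavity property of the associated Newton-polygon construction), and handling any vanishing coefficients of $P$, which may remove certain indices from consideration on both sides without disrupting the matching between essential tropical roots and sign alternations.
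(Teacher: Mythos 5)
Your proposal is correct and follows essentially the same route as the paper: bound $\#_{\bR}(P)$ by the essential tropical roots of $tr^{\lambda^*}_P$, then show each tropical index of $tr^{\lambda^*}_P$ is a central index of $T_\lambda P$ by multiplying the two witness inequalities at $z_k=x_ky_k$, and finally convert sign alternations into real roots. You merely make explicit the intermediate-value/ordering step that the paper leaves implicit in its closing sentence.
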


\begin{proof}
Consider a polynomial $P(x) = \sum_{i=0}^d a_ix^i$, and its image
\[
T_\lambda[P] = \sum_{i=0}^d \lambda_ia_ix^i 
= \sum_{i=0}^d \frac{\lambda_i}{\lambda^*_{i,d}}\, \lambda^*_{i,d}a_ix^i
\]
under the operator $T_\lambda$.
Since $\lambda^*$ is a triangular real-to-tropical root preserver,
the number of essential tropical roots of the polynomial
\[
R(x) =  \sum_{i=0}^d \lambda^*_{i,d}a_ix^i
\]
is at least equal to the number of real roots of $P$.
Let $0=k_0 < k_1 < \dots < k_m = d$ be the tropical indices of
$R(x)$, and let $x_0, \dots, x_m > 0$ be such that the tropical
index $k_j$ is dominating at $x_j$, that is
\begin{equation}
\label{eqn:Ineq1}
\lambda^*_{j,d}|a_j|x_j^j \geq \max_{i\neq j} \lambda^*_{i,d}|a_i|x_j^i.
\end{equation}

Since  each $k_j$ is a central index of the polynomial
$Q_d(x)$, we can find points $y_1, \dots, y_m$ such that 
\begin{equation}
\label{eqn:Ineq2}
\frac{\lambda_j}{\lambda^*_{j,d}}\,y_j^j \geq \sum_{i\neq j}\frac{\lambda_i}{\lambda^*_{i,d}}\,y_j^i.
\end{equation}

Inequalities \eqref{eqn:Ineq1} and \eqref{eqn:Ineq2} imply that
\begin{equation*}
\lambda_j |a_j|(x_jy_j)^j 
=\frac{\lambda_j}{\lambda^*_{j,d}}\,y_j^j\,\lambda^*_{j,d}|a_j|x_j^j 
\geq \sum_{i\neq j}\frac{\lambda_i}{\lambda^*_{i,d}}\,y_j^i\,\lambda^*_{i,d}|a_i|x_j^i
= \sum_{i\neq j}\lambda_i|a_i|(x_jy_j)^i.
\end{equation*}
Thus, each $k_j$ is a central index of $T_\lambda[P]$.
In particular, the number of real roots of $T_\lambda[P]$
is at least equal to the number of essential tropical roots of
$R(x)$, which in turn is at least equal to the number of real roots
of $P$.
\end{proof}

%

\begin{tm}
Assume that the sequence $\{e^{-k^2}\}_{k=0}^\infty$ is a real-to-tropical root preserver. 
Then, the sequence $\{e^{-k^\al}\}_{k=0}^\infty$ is a CZDS for all $\al\ge 3$.
\end{tm}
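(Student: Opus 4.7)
The plan is to apply Theorem~\ref{thm:CZDS} with the triangular sequence $\lambda^*_{k,d}:=e^{-k^2}$, which by hypothesis is a real-to-tropical root preserver, and with the sequence $\lambda_k:=e^{-k^\al}$. The auxiliary polynomial appearing in that theorem then becomes
\[
Q_d(x)=\sum_{k=0}^{d}\frac{e^{-k^\al}}{e^{-k^2}}\,x^k=\sum_{k=0}^{d}e^{k^2-k^\al}\,x^k,
\]
so the task reduces to showing that, for every $d$, the set of central indices of $Q_d$ is $\{0,1,\dots,d\}$, i.e.\ $Q_d$ is sign-independently real-rooted.

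To establish this I would invoke Corollary~\ref{cor:sign-ind 2log3}, whose hypotheses are: (i) every integer $k\in\{0,\dots,d\}$ is a tropical index of $Q_d$, and (ii) all tropical roots of $tr_{Q_d}$ are simple and pairwise separated by more than $2\log 3$. Condition (i) is, by Lemma~\ref{lem:LogConcaveTropical}, equivalent to the log-concavity of the sequence $\mu_k:=e^{k^2-k^\al}$. For (ii), since consecutive tropical indices differ by one, the $k$-th tropical root of $tr_{Q_d}$ is
\[
\xi_k=\log\mu_{k-1}-\log\mu_k=k^\al-(k-1)^\al-(2k-1),
\]
and simplicity follows automatically once the $\xi_k$ are distinct. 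The consecutive gap is
\[
\xi_{k+1}-\xi_k=g(k)-2,\qquad g(k):=(k+1)^\al-2k^\al+(k-1)^\al,
\]
so the log-concavity condition $g(k)\ge 2$ and the separation condition $g(k)-2>2\log 3$ are both controlled by one second-difference estimate.

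The key calculation is then the bound $g(k)\ge 6$ for all $k\ge 1$ and $\al\ge 3$. Using the integral representation of the second difference,
\[
g(k)=\int_{0}^{1}\!\!\int_{0}^{1}\al(\al-1)(k-1+s+t)^{\al-2}\,ds\,dt,
\]
the case $k\ge 2$ is immediate because the integrand is at least $\al(\al-1)\ge 6$; the boundary case $k=1$ reduces to $g(1)=2^\al-2\ge 6$ for $\al\ge 3$. Consequently $g(k)\ge 6>2$, giving log-concavity of $\mu$, and $g(k)-2\ge 4>2\log 3$, giving the required separation of tropical roots.

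Combining these ingredients, Corollary~\ref{cor:sign-ind 2log3} implies that $Q_d$ is sign-independently real-rooted for each $d$, and Theorem~\ref{thm:CZDS} then yields that $\{e^{-k^\al}\}_{k=0}^\infty$ is a CZDS. There is no serious obstacle in this argument: the whole proof hinges on the observation that, after dividing $\lambda$ by $\lambda^*$, the resulting sequence is \emph{quantitatively} log-concave with a uniform margin $g(k)\ge 6$, which simultaneously guarantees the Descartes-style separation threshold $2\log 3$ and feeds directly into the hypothesis of Theorem~\ref{thm:CZDS}. The only point requiring any care is the verification of the boundary value $k=1$, which uses $2^\al\ge 8$ for $\al\ge 3$ and is the reason the exponent threshold is $\al\ge 3$ rather than $\al>2$.
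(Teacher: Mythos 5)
Your proposal is correct and follows essentially the same route as the paper: reduce via Theorem~\ref{thm:CZDS} to showing $Q_d(x)=\sum_k e^{k^2-k^\al}x^k$ is sign-independently real-rooted, and verify this with Corollary~\ref{cor:sign-ind 2log3} by bounding the second difference of $k^\al-k^2$ below to get the $2\log 3$ separation of the tropical roots. Your handling of the bound (integral representation plus the boundary case $g(1)=2^\al-2$) is a minor variant of the paper's estimate $\al(\al-1)k^{\al-2}$ and is, if anything, slightly more careful at $\al=3$.
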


\begin{proof}
For the corresponding polynomial $Q_d(x)=\sum_{k=0}^de^{-k^\alpha+k^2}x^k$ the tropical roots are $\gamma_k=2k-1+(k-1)^\alpha-k^\alpha$. We see that 
$$
\gamma_{k}-\gamma_{k+1}=-2+(k-1)^\alpha+(k+1)^\alpha-2k^\alpha>-2+\alpha(\alpha-1)k^{\alpha-2}$$
as soon as $\al>3$. Already for $\al>2.608\dots$ and $k\ge1,$ the latter expression is bigger than $2\log 3$. Therefore  Corollary~\ref{cor:sign-ind 2log3} implies that $Q_d(x)$ is a sign-independently real rooted for any $\al>3$. Then Theorem~\ref{thm:CZDS} implies the result.
\end{proof}

\begin{rem}{\rm 
The lower bound $\al\ge 3$ for the sequence $\{e^{-k^\al}\}_{k=0}^\infty$ to be a CZDS is apparently not sharp. In particular, computer experiments show that conclusion of Theorem~\ref{thm:CZDS} holds for  $\al> 2.437623\dots$. But since we do not currently see how to prove Conjecture~\ref{conj:main}, we were not trying to get the optional lower bound with the help of  Theorem~\ref{thm:CZDS}.} 
\end{rem}

\end{document}